\date{}
\newtheorem{definition}{Definition}
\newtheorem{example}[definition]{Example}
\newtheorem{remark}[definition]{Remark} 
\newtheorem{theorem}[definition]{Theorem}
\newtheorem{lemma}[definition]{Lemma}
\definecolor{light-gray}{gray}{0.75}
\begin{document}

\title{Extending the Synchronous Fellow 
Traveler Property}

\author{Prohrak Kruengthomya
	\thanks{Mahidol University, Faculty of Science, 
		Department of Mathematics,  
		Bangkok 10400, Thailand; e--mail: 
		prohrakju@gmail.com.} \and
	Dmitry  Berdinsky
	\thanks{Mahidol University, Faculty of Science,
		Department of Mathematics and 
		Centre of Excellence in Mathematics, CHE, 
		Bangkok 10400, Thailand;
		e--mail: berdinsky@gmail.com.} }
\date{}
\maketitle {\small
	\begin{quote}
		\noindent{\bf Abstract}\,\,\,\,   
	  	
	  	We introduce an
	  	extension of the fellow
	  	traveler property 
	  	which allows fellow 
	  	travelers to be at 
	  	distance bounded from above by a function $f(n)$ 	  	
	  	growing slower than 
	  	any linear function.
	    We study normal forms 
	    satisfying this 
	    extended fellow 
	    traveler property 
	    and certain geometric constraints that 
	    naturally generalize 
	    two fundamental properties of an
	    automatic normal form -- the regularity of its language and 
	    the bounded length 
	    difference property.      
	    We show examples of such normal forms
	    and prove some non--existence theorems. 
	    
		\noindent{\bf Keywords:}\,\,\,\,
	    fellow traveler property, normal form, 
	    quasigeodesic, 
	    prefix--closed, 
	    Baumslag--Solitar group,
	    wreath product 
	    

	\end{quote}

\section{Introduction}	

The fellow traveler property is a cornerstone 
of the theory of automatic groups 
introduced by Thurston
and others \cite{Epsteinbook}.
A normal form of a group
satisfies the fellow traveler 
property if  every two normal forms 
of group elements which are at distance one 
(with respect to some fixed set of 
generators) are $k$--fellow travelers for 
some positive integer $k$. 
The latter, informally speaking,
means that when  
two fellow travelers are synchronously moving 
with the same speed
along the paths labeled by such normal forms 
they are always at distance at most 
$k$ from each other.    
Each automatic group admits   
an automatic structure that includes 
a normal form which satisfies
the fellow traveler property 
\cite[\S~2.3]{Epsteinbook}.
If a normal form of a group satisfies 
the fellow traveler property and 
its language is regular, then the  group must be automatic. 

The idea of extending automatic 
groups while retaining the fellow   
traveler property or its natural relaxations 
is not new.
Requiring the fellow traveler property for 
a normal form
but dropping the regularity condition 
for its language leads to the notions of a combing and a combable
group. Bridson showed that 
there exist combable
groups which are not automatic
\cite{Bridson03}\footnote{Note that the  notion of a combable group used by Bridson  is different from the one originally 
	introduced by Epstein and Thurston  \cite[\S~3.6]{Epsteinbook}.}. 
A more general approach is to allow 
fellow travelers moving with different speeds. 
This leads to the notion of the asynchronous 
fellow traveler property. Thurston showed  
that the Baumslag--Solitar group  
$BS(p,q)$ for $1 \leqslant p < q$ admits 
a normal form which satisfies the asynchronous
fellow traveler 
and the language of this normal form is regular \cite[\S~7.4]{Epsteinbook}. 
Bridson and Gilman 
showed that the fundamental 
group of a compact $3$--manifold 
admits a normal form  
which satisfies
the asynchronous fellow traveler property 
and its language is indexed \cite{BridsonGilman1996}.

In this paper we consider a 
relaxation of the fellow traveler 
property requiring fellow 
travelers to move with the same 
speed (synchronously) but 
allowing them to be at distance bounded from 
above by $f(n)$, where  
$f : \mathbb{N} 
     \rightarrow 
     \mathbb{R}_+$ 
is a function growing slower than any linear function, e.g., 
$n^{\alpha}$ for $0 < \alpha < 1$ 
or $\log(n)$,
and $n$ is the distance that 
fellow travelers traversed 
starting from the origin, see 
Definition \ref{fellow_traveler}. 
In this case we  
say that a normal form 
satisfies the 
$f(n)$--fellow traveler 
property.  
First in the 
subsection \ref{two_ways_fellow_prop}
we notice that such normal
forms can be trivially constructed.    
So then in the subsection \ref{quasigeo_quasireg_subsec} 
we introduce two types of geometric constraints      
each of which makes
constructing such normal forms nontrivial. 
The first  
geometric constraint  requires a normal 
form to be quasigeodesic. 
It is a
relaxation of the bounded 
length difference 
property\footnote{Note the 
bounded length difference 
property is referred to as 
the comparable length property 
in \cite{Bridson03}. Also, 
the bounded length difference 
property is incorporated in the notion of a combable group introduced by Epstein and Thurston. 
It is an open question whether there exists a combable group
in the sense of Epstein 
and Thurston which is not
automatic.}
\cite[Lemma~2.3.9]{Epsteinbook}
which requires the 
geodesic length of a group 
element and the length of its 
normal form to be linearly 
comparable, see 
Definition \ref{quasi_def}. 
The second geometric 
constraint requires 
a normal form to be quasiregular. It is a relaxation of the 
regularity condition for 
the language of a normal form
which requires that for each 
prefix $u$ 
of a normal form of
a group element there exists 
a word $v$ of length at 
most $c>0$ for which 
$uv$ is a normal form of
some group element, see 
Definition \ref{quasireg_def}. 
Alternatively,  
quasiregularity  
can be considered as a 
relaxation of 
prefix--closedness;  
see also Definition \ref{quasiprefclos_def} and Theorem \ref{quasirefequalquasiprefclos}.

The main results of the paper 
are as follows. 
In Theorems 
\ref{ssp_no_quasidesic_thm}
and
\ref{nonfp_no_quasigeodesic_thm}
we show that there exists 
no quasigeodesic normal form
which satisfies the  
$f(n)$--fellow traveler 
property in 
a finitely presented 
group with the
strongly--super--polynomial 
Dehn function and 
a non--finitely 
presented group, respectively;
see Definition \ref{ssp_def}
for the notion of a 
strongly--super--polynomial 
function. 
In Theorem \ref{relation_cayley_automatic_thm} we show relation between 
the notion of a 
Cayley distance function 
studied by Elder, Taback, 
Trakuldit and the second author
\cite{BET22,eastwest19,LATA18} 
and quasigeodesic normal 
forms satisfying 
the $f(n)$--fellow traveler property. Namely,
Theorem
\ref{relation_cayley_automatic_thm}  shows that if 
a non--automatic group 
has a Cayley 
automatic representation 
for which the Cayley distance function $h(n)$ grows 
slower than a linear 
function\footnote{The existence of such Cayley automatic representations is an open question.}, 
then this group admits   
a quasigeodesic normal form 
satisfying the $h(n)$--fellow
traveler property. 
Theorem \ref{BS_qusireg_normal_form}
shows that for $1 \leqslant p < q$
the Baumslag--Solitar group 
$BS(p,q) = \langle a, t \, 
| \, t a^p t^{-1} = a^q \rangle$
admits a prefix--closed 
(so quasiregular) normal
form satisfying the
$\log (n)$--fellow traveler 
property. 
Theorem \ref{Z2wrZ2_quasireg_norm_form} 
shows that the wreath 
product 
$\mathbb{Z}_2 \wr \mathbb{Z}^2$ 
admits a prefix--closed 
normal form satisfying the
$\sqrt{n}$--fellow traveler 
property\footnote{Note that
by Theorems
\ref{ssp_no_quasidesic_thm}
and
\ref{nonfp_no_quasigeodesic_thm}
the groups $BS(p,q)$ for 
$1 \leqslant p < q$ and 
$\mathbb{Z}_2 \wr \mathbb{Z}^2$
do not admit quasigeodesic 
normal forms satisfying the
$f(n)$--fellow traveler 
property.}. 

The rest of the paper is organized 
as follows. 
In Section \ref{preliminaries_sec}
we recall the notions of
a group, a normal form,  
an automatic group, the fellow traveler property and 
the relations 
$\preceq$ and $\ll$
for nondecreasing functions
appeared in 
this paper.  
In Section \ref{fellow_sec} 
we introduce the
$f(n)$--fellow traveler 
property, quasigeodesic  
and quasiregular normal forms. 
In Section \ref{quasigeodesic_sec} 
for quasigeodesic
normal forms satisfying 
the $f(n)$--fellow traveler 
property
we prove the  non--existence 
theorems  in groups with the 
strongly--super--polynomial
Dehn function and 
non--finitely presented 
groups and show relation with 
the notion of a Cayley distance
function. 
In Section \ref{quasiregular_sec} 
we show examples of 
quasiregular
normal forms satisfying 
the $f(n)$--fellow traveler 
property.  
Section \ref{conclusion_sec} concludes the paper.  

\section{Preliminaries}
\label{preliminaries_sec} 
 
 In this section we introduce necessary 
 notations and  recall some definitions.

 {\it Groups and normal forms}. Let $G$ be a finitely generated infinite group and 
 $A = \{a_1, \dots, a_m \}$, where 
 $a_i \in G$ for $i = 1, \dots, m$,  
 be a finite set generating $G$: 
 each  element of $G$ can 
 be written as a product 
 of elements from $A$ and 
 their inverses. We allow different 
 $a_i, a_j \in A$, $i \neq j$, 
 to be equal in $G$ and  
 some elements in $A$ to be 
 equal the identity $e \in G$. 
 We denote by $A^{-1}$ the set of formal inverses 
 for elements in $A$: 
 $A^{-1}= \{a_1 ^{-1}, \dots, a_m ^{-1}\}$ 
 and  by $S$ the union of $A$ 
 and  $A^{-1}$: $S = A \cup A^{-1}$. 
 We denote by  
 $d_A$ the word metric in $G$ relative 
 to $A$: for $g_1, g_2 \in G$ 
 the distance $d_A (g_1, g_2)$ is the 
 length of a shortest word 
 $u \in S^*$  equal to 
 $g_1 ^{-1} g_2$ in $G$.  
 For a given $g \in G$ we denote by 
 $d_A (g)$ the distance between $g$ and 
 the identity $e \in G$  
 with respect to $d_A$:  
 $d_A(g) = d_A (e,g)$. 
 For a given $w  = s_1 \dots s_\ell \in S^*$, 
 we denote by 
 $|w|$ the length of $w$: 
 $|w|=\ell$ and by $\pi(w)$ 
 the group element  $s_1 \dots s_\ell \in G$, 
 where $\pi$ refers to the canonical 
 projection map $\pi : S^* \rightarrow G$.  
 
 A normal form of $G$ is a rule for assigning 
 a word $w \in S^*$ to a group element 
 $g \in G$ such that $\pi (w) = g$. The word $w$ is referred to as 
 a normal form of $g$. 
 In this paper we always assume 
 that a normal form is one--to--one:
 for each $g \in G$ exactly one word  
 $w \in S^*$ is assigned. 
 A normal form defines  
 a language $L \subseteq S^*$.  
 Similarly, a language $L \subseteq S^*$
 for which the restriction  
 $\pi_L : L \rightarrow G$  is
 surjective and one--to--one defines a 
 normal form of $G$.   
  
 \vskip1mm
 
 {\it The fellow traveler property and automatic groups.}    
  Let $w \in S^*$ be a word and 
  $t \in [0,+\infty)$ be a nonnegative integer. 
  We define 
  $w(t)$ to be the prefix of $w$ of a
  length $t$ if $t\leqslant |w|$ and 
  $w$ if $t>|w|$.  
  Let $L \subseteq S^*$ be a normal 
  form of $G$.    
  We denote by $s(n)$ 
  a function 
  $s : [0, + \infty) \rightarrow \mathbb{R}_+$ 
  defined as:
  \begin{equation}
  	\label{def_function_f_1}  
  	s(n)= \max \{d_A (\pi(w_1(t)),
  	\pi (w_2(t))) \, | \, t \leqslant n, \, w_1, w_2 \in L, 
  	a \in A , \pi(w_1)a = \pi(w_2)\}.
  \end{equation} 
  The function $s(n)$ is the maximum distance 
  between fellow travelers moving 
  with the same speed along 
  the paths in $\Gamma (G,A)$ 
  labeled by words $w_1 $ and 
  $w_2$ for which 
  $\pi (w_1) a = \pi (w_2)$ for some $a \in A$. 
  \begin{definition}[the fellow traveler property] 
  \label{standard_fellow_def}	 
  	 It is said that the normal form 
  	 $L \subseteq S^*$ 
  	 satisfies the fellow traveler 
  	 property if  the function
  	 $s(n)$ given by \eqref{def_function_f_1} 
  	 is bounded from above by a constant.  
  \end{definition}	
\noindent Recall that the group $G$ is called automatic if 
it admits a normal form 
$L \subseteq S^*$ for which the language 
$L$ is regular and for each 
$a \in A$ the binary relation 
$R_a = \{ (u,v) \,|\, \pi(u)a = \pi(v)\}$ 
is recognized by a two--tape synchronous 
automaton \cite{Epsteinbook}. 
In this case the normal form $L$ satisfies 
the fellow traveler property. 
Equivalently, if $G$ admits a normal form $L \subseteq S^*$  which satisfies the fellow traveler property
and $L$ is regular, then $G$ must be automatic 
\cite[Theorem~2.3.5]{Epsteinbook}. 
In this paper we focus on groups which 
are not automatic (non--automatic groups).     
  
\vskip1mm

 {\it The relations $\preceq$ and $\ll$ for 
 nondecreasing functions}.	
We denote by $\mathbb{N}$ a set of 
all natural numbers which includes zero. 
For a given $N \in \mathbb{N}$ 
we denote by $[N,+ \infty)$  
the set 
$\left[N,+\infty\right) = 
\{n \in \mathbb{N}\,|\, n \geqslant N\}$.  
Let $\mathcal{F}$ be the set of 
all nondecreasing functions 
$ f: [N, + \infty ) \rightarrow \mathbb{R}_{+}$, 
where  $\mathbb{R}_{+} = \{x \geqslant 0\,|\, 
x \in \mathbb{R} \}$.  

\begin{definition}[$\preceq$ relation]
	\label{coarseineqdef}   
	For given $f,h \in \mathcal{F}$     
	we say that $h \preceq f$ if 
	there exist positive integers  
	$K,M$ and a nonnegative $N$    
	such that     
	$h (n) \leqslant K f(M n)$
	for all  $n \in \left[N, + \infty \right)$.    
	We say that 
	$h \asymp f$ if $h \preceq f$ and 
	$f \preceq h$. 
	If $h \preceq f$ and $h \not\asymp f$, 
	we say that $h \prec f$. 
\end{definition}

\begin{definition}[$\ll$ relation] 
	For given $f,h \in \mathcal{F}$ we say that 
	$h \ll f$ if there exists an unbounded  
	function $t \in \mathcal{F}$ such that 
	$ht \preceq f$. 
\end{definition}	 

\noindent We denote by 
$\mathfrak{i} : [0, + \infty ) \rightarrow 
\mathbb{R}_+$ the identity function: 
$\mathfrak{i}(n)=n$.
Note that 
$f \ll \mathfrak{i}$ is stronger 
than $f \prec \mathfrak{i}$, that is,    
$f \ll \mathfrak{i}$ implies  
$f \prec \mathfrak{i}$. Indeed, $f \ll \mathfrak{i}$ 
implies $f \preceq \mathfrak{i}$. 
Now suppose that $f \ll \mathfrak{i}$ and 
$\mathfrak{i} \preceq f$. 
The inequality $\mathfrak{i} \preceq f$ 
implies that there exist positive integers 
$K,M$ and a nonnegative integer $N$ 
such that $n \leqslant K f(Mn)$ for 
all $n \in \left[ N ,+ \infty \right)$.     
The inequality $f \ll \mathfrak{i}$ implies that there exist  positive integers $K',M'$ and a nonnegative
integer $N'$ such that  
$f(n) t(n)\leqslant K'M' n $ for all 
$n \in \left[N',+\infty \right)$, where $t(n)$ 
is some unbounded function. 
Therefore, $f(n) \leqslant \frac{K'M'}{t(n)}n$ 
for all $n \in \left[ N', +\infty\right)$, which implies that $Kf(Mn) \leqslant \frac{KK'MM'}{t(Mn)}n$ also 
for all $n \in \left[ N', +\infty\right)$. 
Since $t(n)$ is unbounded,  
we get a contradiction. Therefore, 
we have that
$\mathfrak{i} \not\preceq f$, so 
$\mathfrak{i} \prec f$.   
The reverse ($f \prec \mathfrak{i}$ 
implies $f \ll \mathfrak{i}$) 
in general is not true 
as it is shown in Example \ref{example_ll_vs_prec}.   
\begin{example}
	\label{example_ll_vs_prec}	  
	Let $n_i, i \geqslant 1$ be an infinite 
	sequence defined recursively by the 
	identities: 
	$n_0=0$, $n_1 =1$ and $n_{i+1}= n_i 2^{2i}$ 
	for $i \geqslant 1$.  
	Let $f(n)$ be a function for which 
	$f (n) = 2 n_i$ for $n_i \leqslant n < n_{i+1}$.      
	Clearly, $f(n) \preceq \mathfrak{i}$. 
	Let us show that 
	$\mathfrak{i} \not \preceq f(n)$. 
	The inequality $\mathfrak{i} \preceq f$ 
	implies that  
	there exist positive integers 
	$K,M$ and $N$ such that 
	$n \leqslant K f(Mn)$ for all 
	$n \in \left[N, + \infty \right)$. 
	Therefore, if $n_i 2^i \geqslant N$, 
	then $n_i 2^i \leqslant K f(M n_i 2^i)$. 
	By the definition of $f(n)$, if $M < 2^i$, then $f(M n_i 2^i) = 2n_i$.   
	Therefore, if $n_i 2^i \geqslant N$ 
	and $M < 2^i$, then $n_i 2^i \leqslant 2Kn_i$.  
	The latter inequality is true only if $2^i \leqslant 2K$ and false, otherwise. 
	Thus, $\mathfrak{i} \not\preceq f$.  
	Therefore, $f \prec \mathfrak{i}$.  	
	Let us show now 
	that $f \not\ll \mathfrak{i}$.
	The inequality $f \ll \mathfrak{i}$ implies 
	that there exist positive integers 
	$K,M$ and a nonnegative integer 
	$N$ such that $t(n)f(n) \leqslant 
	KM n$ for some unbounded $t \in \mathcal{F}$ 
	and all $n \in \left[N, +\infty\right)$.
	Since $t(n)$ is unbounded, there exists 
	$i_0$ such that for all $i \geqslant i_0$, 
	$t(n_i) \geqslant KM$ which implies 
	that $f(n_i) = 2n_i \leqslant n_i$ for 
	all $i \geqslant i_0$. The latter is 
	impossible for $i >0$. 
\end{example}	


\section{The $f(n)$--Fellow 
Traveler Property}
\label{fellow_sec}

 We extend the fellow traveler 
 property by allowing the distance 
 between fellow travelers 
 to be bounded from above by a 
 nondecreasing function 
 $f(n)$.      
 If $f(n)$ is bounded from above
 by a constant, 
 then  one simply gets  
 the fellow traveler property, see Definition
 \ref{standard_fellow_def}.  
 We will allow the function $f(n)$ 
 to be unbounded.
 
\begin{definition}[the $f(n)$--fellow traveler property]
\label{fellow_traveler}	
	Let $f \in  \mathcal{F}$ be a function  
	for which $f \ll \mathfrak{i}$.   
    We say that a 
    given normal form $L \subseteq S^*$ 
    of a group $G$ 
    satisfies the 
    $f(n)$--fellow traveller property 
    if for the function $s(n)$ given 
    by \eqref{def_function_f_1} 
    the  inequality   
    $s \preceq f$
    holds.             
\end{definition}

 \noindent In Definition 
 \ref{fellow_traveler} the function 
 $f(n)$ is an upper bound for the 
 distance between fellow travelers 
 in coarse sense. It can be verified 
 that Definition \ref{fellow_traveler} 
 does not depend on the choice 
 of the set of generators $A$.   
 By the triangle inequality, 
 for every normal form in $G$
 we always have 
 that
 $s(n) \leqslant 2n$ for all 
 $n \in \mathbb{N}$. 
 From Example \ref{example_ll_vs_prec} we see that there exists a function $f \prec \mathfrak{i}$ for which 
 $f(n)= 2n$ at infinitely many
 points. If 
$f \ll \mathfrak{i}$, then, informally speaking, $f$  genuinely 
grows slower than $\mathfrak{i}$. 
This explains the choice of the inequality 
$f \ll \mathfrak{i}$ over    
the inequality $f \prec \mathfrak{i}$ in 
Definition \ref{fellow_traveler}.
For illustration of the $f(n)$--fellow traveler 
property see Fig.~\ref{fellow_pic}.  

 \begin{figure}
	\centering 	
	\includegraphics[width=6.5cm]{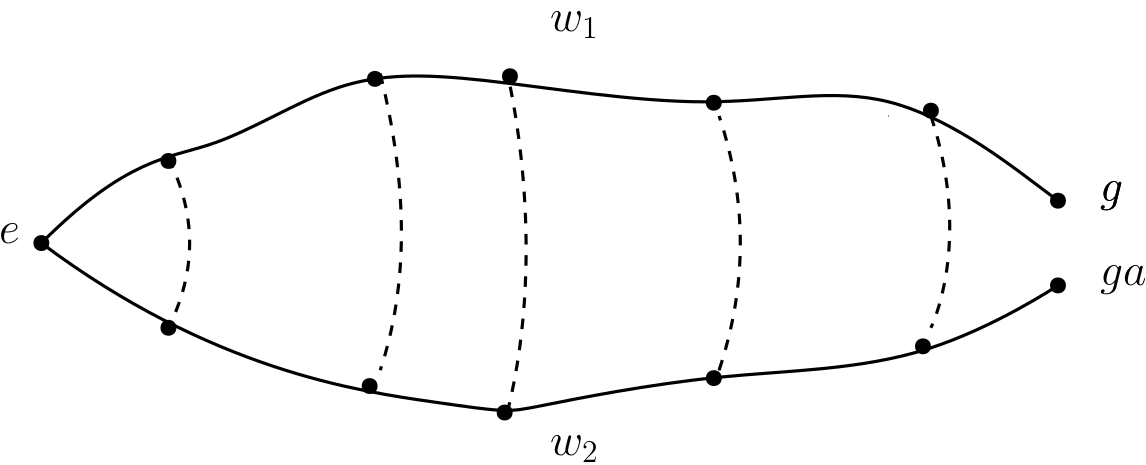}
	\caption{The upper and lower curves show 
	         the paths labeled by the 
	         normal forms $w_1$ and $w_2$
	         of group elements $g$ and $ga$, 
	         respectively. 
	         The pairs of dots and dashed curves connecting  it  
	         show fellow travelers and
	         shortest paths between them, 
	         respectively.}
	\label{fellow_pic}
\end{figure}


\subsection{Normal forms satisfying the 
	$f(n)$--fellow traveler property}
\label{two_ways_fellow_prop}

 We now show two  ways 
 of modifying a given normal form 
 so the modified normal form 
 satisfies the $f(n)$--fellow traveler 
 property for some 
 $f \ll \mathfrak{i}$. 
 Let us be given a normal form of 
 a group $G$ defined by a language 
 $L \subseteq S^*$. 
 

 
 \vskip1mm
 
 {\it First way}.
 Let $u \in S^*$ be a word defining a loop in $\Gamma(G,S)$: $\pi (u) = e$. 
 We define a language $L' \subseteq S^*$ as 
 $L' = \{ u^{\ell^2} w\,|\, w \in L \wedge 
          \ell = |w| \}$. 
 That is, for every $w \in L$ we attach a prefix 
 $u^{\ell^2}$ to the word $w$, 
 where $\ell = |w|$. See  Figure \ref{way1} for 
 illustration.  
 For a normal form given by a language $L'$ the 
 function $s(n)$ is bounded from above 
 by $\sqrt{n}$: $s(n) \preceq \sqrt{n}$. 
 Indeed, let us consider two words 
 $w_1 \in L$ and $w_2 \in L$ for which 
 $\pi (w_1)a = \pi (w_2)$. 
 Let $|u|=c$, $\ell_{1}=|w_{1}|$ and $\ell_{2}=|w_{2}|$. 
 Without loss of generality we assume that $\ell_1 \leqslant\ell_2$. 
 We denote by $n$ a number of steps traversed by fellow travelers 
 along the paths labeled by $w_1$ and $w_2$. 
 There are  three different cases to consider: 
 \begin{itemize}
 \item{If $n\leqslant c\ell_1^{2}$, then the distance between fellow travelers is bounded from above by $2c$.}
 \item{If $c\ell_1^{2}<n\leqslant c\ell_2^{2}$, 
 then the distance between fellow travelers is bounded from above by $(\ell_{1}+c)$, so it is 
 strictly less than $\sqrt{n/c}+c$.}
 \item{If $c\ell_2^{2}<n\leqslant c\ell_2^{2}+\ell_2$, then the distance between fellow travelers is bounded from above by $\ell_1$+$\ell_2 \leqslant 2\ell_2$, so it is strictly less than $2 \sqrt{n/c}$.} 
 \end{itemize}
 From these three cases 
 it can be seen that $s(n) \preceq \sqrt{n}$ 
 for the normal form defined by the language $L'$.
 
  \begin{figure}
 	\centering 	
 	\includegraphics[width=6.5cm]{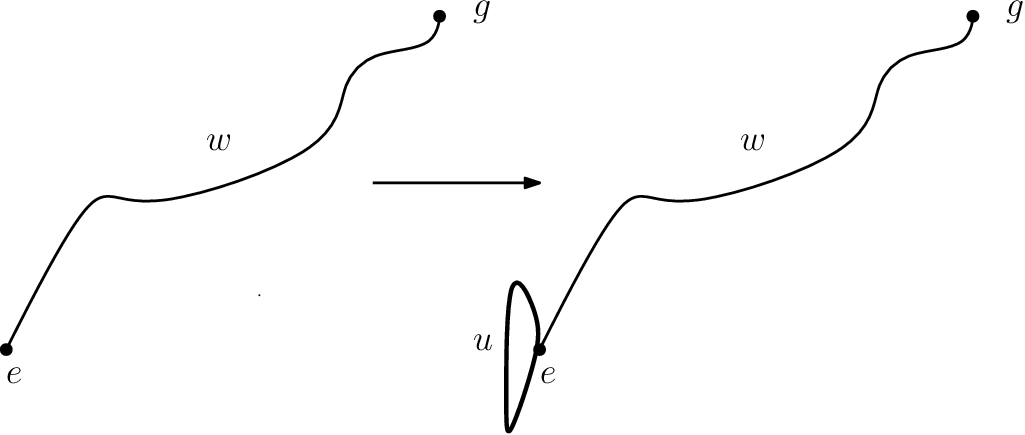}
 	\caption{A curve on the left shows 
 	the path labeled by a normal 
 	form $w$ of a group element $g$. A curve 
 	on the right shows the path labeled by
 	the modified normal 
 	of $g$: first it traverses 
 	the loop labeled by $u$ doing it  
 	$\ell^2$ times and then it traverses 
 	the path labeled by $w$.}
 	\label{way1}
 \end{figure}
 
 \vskip1mm 
 
 {\it Second way.}    
 Let $w = s_1 \dots s_m$ be a nonempty word 
 in the language $L$. 
 For each integer $k \in \{1, \dots, m \}$, 
 let us choose a loop $u_k \in S^*$ 
 for which 
 $ C_1 \sqrt{k} \leqslant |u_k| \leqslant C_2 \sqrt{k}$ for some 
 fixed constants $C_1,C_2 >0$. 
 These loops $u_k$, 
 $k = 1,\dots,m$ can be chosen arbitrarily for each $w \in L$ and 
 $k \in \{1,\dots,m\}$, 
 where $m = |w|$. 
 Let $w' =s_1 u_1 s_2 u_2 
 \dots s_m u_m$. If $w= \varepsilon$, 
 then we put $w' = \varepsilon$. 
 We define a language 
 $L'' \subseteq S^*$ as 
 $L'' = \{ w' \,|\, w \in L\}$.           
 See Figure \ref{way2} for illustration. 
 
 For a normal form given by a language $L''$ 
 the function $s(n)$ is bounded from above 
 by $\sqrt{n}$: $s(n) \preceq \sqrt{n}$.  
 Indeed, let us consider two words 
 $w_1 = s_1 \dots s_{m_1} \in L$ and 
 $w_2 = t_1 \dots t_{m_2} \in L$
 for which $\pi(w_1) a = \pi(w_2)$.
 For the words $w_1$ and $w_2$, 
 let $w_1 ' = s_1 u_1 \dots s_{m_1} u_{m_1}$
 and $w_2 ' = t_1 v_1 \dots t_{m_2} v_{m_2}$, 
 respectively.  
 Let $n$ be a number of steps 
 traversed  by fellow travelers 
 along the paths labeled by $w_1$ 
 and $w_2$. 
 We denote by  
 $k_1$ the   
 integer for which 
 $w_1 (n) = s_1 u_1 \dots 
            s_{k_1} u_{k_1}  q_1$, 
 where either $k_1= m_1$ and $q_1 = \varepsilon$ 
 or $q_1$ is a proper prefix of 
 $s_{k_1+1}u_{k_1+1}$. 
 Similarly, we denote by $k_2$ the integer 
 for which 
 $w_2 (n)= t_1 v_1 \dots t_{k_2} v_{k_2} q_2$, 
 where either $k_2=m_2$ and 
 $q_2 = \varepsilon$ or $q_2$ is a proper 
 prefix of $t_{k_2+1}v_{k_2+1}$.   
 Now the distance between fellow travelers 
 is bounded from above by 
 $k_1 + |q_1| + k_2 + |q_2| \leqslant
  k_1 + (1 + C_2 \sqrt{k_1 +1}) + k_2 + 
  (1 + C_2 \sqrt{k_2 +1})$. On the other hand, 
  we have that 
  $C k_1 ^{\frac{3}{2}} \leqslant n$ and 
  $C k_2 ^{\frac{3}{2}} \leqslant n$ for 
  some constant $C>0$. Therefore, 
  $s(n) \preceq n^{\frac{2}{3}}$ for the 
  normal form defined by the language $L''$.
   
  \begin{figure}
  	\centering 	
  	\includegraphics[width=6.5cm]{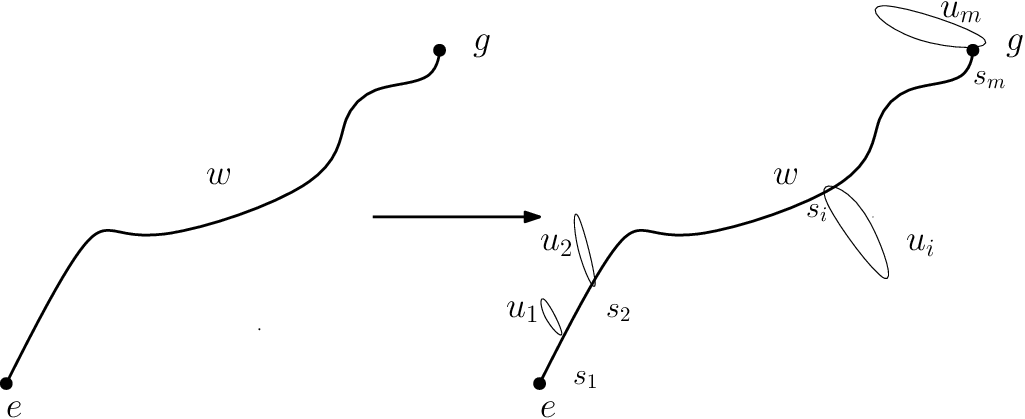}
  	\caption{A curve on the left shows 
  		the path labeled by a 
  	normal form 
  	$w =s_1 s_2 \dots s_m $
  	of a group element $g$. 
  	A curve on the right shows 
  	the path labeled by 
  	the	modified normal form of $g$: 
  	$s_1 u_1 s_2 u_2 \dots s_i u_i 
  	 \dots s_m u_m$.}
  	\label{way2}
  \end{figure}

\subsection{Quasigeodesic and 
quasiregular normal forms} 
\label{quasigeo_quasireg_subsec}

We introduce two kinds of geometric constraints for a normal form  
which originate 
from two basic properties 
of an automatic normal form:
the bounded length difference
property
\cite[Lemma~2.3.9]{Epsteinbook} and the regularity of its language. 
These geometric constraints
in general are an obstacle for trivial constructions of  normal forms satisfying 
the $f(n)$--fellow
traveler property shown in the 
subsection \ref{two_ways_fellow_prop}. 
In this paper normal forms  
satisfying these geometric 
constraints are referred to as 
quasigeodesic and quasiregular.  

\vskip1mm 

{\it Quasigeodesic normal forms}. Recall that 
the bounded length difference property for the 
normal form defined by a language 
$L \subseteq S^*$ 
means that there exists a constant 
$C'>0$ such that for every 
$w_1,w_2 \in L$ for which 
$\pi(w_1) a = \pi (w_2)$ for 
some $a \in A$ the inequality 
$||w_1| - |w_2|| \leqslant C'$ holds.
Let $C= \max  \{C', |w_0| \}$, where 
$w_0$ is the normal form of 
the identity $e \in G$. Then the bounded 
length difference property implies that 
for every $w \in L $ we have 
$|w| \leqslant C (d_A (\pi (w))+1)$.
The latter is the notion of a quasigeodesic normal form
introduced by Elder and Taback \cite{ElderTabackCgraph},
see Definition \ref{quasi_def}.
Note that 
for a normal form being quasigeodesic, in general, 
does not imply having the bounded 
length difference property.   
\begin{definition}[quasigeodesic normal form] 
\label{quasi_def}   
   A normal form defined by a language 
   $L \subseteq S^*$ 
   is said to be quasigeodesic 
   if there is a constant 
   $C>0$  such that for every $w \in L$ 
   the inequality 
   $|w| \leqslant C (d_A (\pi (w)) + 1)$
   holds.         
\end{definition}
\noindent Note that if $L \subseteq S^*$ is a quasigeodesic normal form in the sense of Definition \ref{quasi_def}, 
the paths labeled by elements of $L$ might not be 
quasigeodesics in the standard sense, i.e., not every 
subword $u'$ of $u \in L$ needs
to satisfy the inequality $|u'| \leqslant C (d_A (\pi(u'))+1)$.
We notice that normal forms 
constructed in the subsection \ref{two_ways_fellow_prop} are not quasigeodesic. Indeed, for the first way 
we have that the normal form of a group 
element $\pi (w)$ is $w' = u^{\ell^2} w$, where 
$\ell = |w|$. Since 
$d_A (\pi (w)) \leqslant \ell$ and 
$|w'| = |u|\ell^2 + \ell \geqslant \ell^2$, 
the inequality 
$|w'| \leqslant C(d_A (\pi(w))+1)$ cannot 
hold for all $w \in L$ and some constant $C>0$. 
For the second way we have that the normal 
form of a group element $\pi (w)$ is 
$w' = s_1 u_1 s_2 u_2 \dots s_m u_m$.
Since $d_A(\pi(w)) \leqslant m$ and
$w' = m + |u_1| + \dots + |u_m| \geqslant 
C_1 \frac{m}{2} \sqrt{\frac{m}{2}}$, 
the inequality 
$|w'| \leqslant C(d_A (\pi (w))+1)$ cannot
not hold for all 
$w \in L$ and some constant $C>0$.  


 \vskip1mm 
 
 {\it Quasiregular normal forms}. 
 In Definition  \ref{quasireg_def} 
 we introduce the notion of a 
 quasiregular  normal form.   
 We consider it as a geometric version of 
 the regularity of the language $L \subseteq S^*$
 defining a normal form. 
 Indeed, if a language 
 $L \subseteq S^*$ is regular, then 
 the normal form it defines is quasiregular. 
 Note that a quasiregular normal form does not necessarily define a regular language 
 $L \subseteq S^*$.   
 \begin{definition}[quasiregular normal form]
 	\label{quasireg_def}	
 	We say that a normal form   
 	defined by a language 
 	$L \subseteq S^*$ is quasiregular 
 	if there exists a constant $c \geqslant 0$ such that for  each prefix $u \in S^*$ of a word $w = uv \in L$  there is a word $x \in S^*$ of length $|x| \leqslant c$ for which $ux \in L$.  
 \end{definition} 
  \noindent We notice that a normal 
  form constructed using the first 
  way in the subsection \ref{two_ways_fellow_prop} 
  is not quasiregular.  
  Indeed, there are infinitely many 
  words $u^k x \in L$, where $u$ is a fixed 
  loop and $|x|\leqslant c$, that represent 
  group elements for which the distances
  to $e \in G$ are bounded by some constant 
  from above. The number of such group 
  elements is finite. Since we consider only 
  one--to--one normal forms, the latter is impossible. 
  Also, there does not seem 
  to be any trivial construction of 
  quasiregular normal forms using the 
  second way in the subsection \ref{two_ways_fellow_prop}.  
   
  One can consider a more restricted version 
  of the notion of a quasiregular normal forms by 
  additionally requiring 
  in Definition \ref{quasireg_def} 
  that $x$ is a prefix of $v$. This leads 
  to the notion of a quasiprefix--closed normal 
  form introduced in Definition 
  \ref{quasiprefclos_def}.    
  \begin{definition}[quasiprefix--closed 
	normal form]
	\label{quasiprefclos_def}	
	We say that a normal form  
	defined by a language 
	$L \subseteq S^*$ is  
	quasiprefix--closed if there 
	exists a constant $C \geqslant 0$ such that for every prefix $u \in S^*$ of a word 
	$w = u v \in L$ there is
	a prefix $x \in S^*$ of length 
	$|x| \leqslant C$ of the word $v$ 
	for which $ux \in L$.    
\end{definition}    
 \noindent Prefix--closed normal forms 
 are exactly quasiprefix--closed normal forms with the constant $C=0$.
 A quasiprefix--closed normal form is  
 quasiregular. 
 The reverse in general is not true. 
 However, Theorem \ref{quasirefequalquasiprefclos}
 shows that if a quasiregular 
 normal form satisfies the 
 $f(n)$--fellow traveler property, then 
 one can construct 
 a quasiprefix--closed normal form 
 which also satisfies the $f(n)$--fellow traveler property. So in the context of the
 $f(n)$--fellow traveler property 
 the notions of a quasiregular normal form and 
 a quasiprefix--closed normal form 
 are equivalent. 
    	 	   
          
 \begin{theorem} 
 \label{quasirefequalquasiprefclos}    
     Suppose $L \subseteq S^*$ defines
     a quasiregular  normal form for some 
     constant $c \geqslant 0$  
     which satisfies the $f(n)$--fellow traveler property. 
     Then one can construct a 
     quasiprefix--closed normal form 
     $L' \subseteq S^*$ for the constant $C=4c$ which also satisfies the $f(n)$--fellow traveler property.   
 \end{theorem}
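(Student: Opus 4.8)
The plan is to take the quasiregular normal form $L$ and, for every element $g\in G$, replace the $L$-normal form $w_g$ of $g$ by a word $w'_g$ obtained by following $w_g$ but, whenever the current prefix $u$ of $w_g$ is ``close'' to being completable by a short suffix of $w_g$ itself, diverting along that short suffix to a genuine normal form and continuing. Concretely, for a prefix $u$ of $w=w_g\in L$, quasiregularity gives a word $x$ with $|x|\le c$ and $ux\in L$; the point is that $x$ need not be a prefix of the remaining tail of $w$. I would record, for each length $t$, the element $\pi(w(t)x_t)$ reached by appending such a short completion, and use the $f(n)$-fellow traveler property of $L$ to compare this with $\pi(w)$: since $w(t)x_t$ and $w$ are both in $L$ and differ by a bounded number of generators (at most $|x_t|+$ the length of the tail, which is not bounded — so one must be more careful), the real comparison should be between $w(t)x_t\in L$ and the $L$-word of the \emph{same} element read further along. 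The cleaner route is: define $L'$ directly as the set of words $w'$ where $w'$ is built greedily from $w\in L$ so that every prefix of $w'$ extends to an element of $L'$ by a suffix of length $\le 4c$; then verify $L'$ is a normal form (one word per element), is quasiprefix--closed with constant $4c$, and fellow-travels at rate $\preceq f$.

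First I would make the construction precise. Given $w=w_g=s_1\cdots s_\ell\in L$, I build $w'_g$ by scanning left to right: maintain a pointer into $w$; at each stage, from the current prefix $p$ (a prefix of $w'_g$ already committed, with $\pi(p)$ equal to $\pi$ of a prefix of $w$), use quasiregularity of $L$ to find $x$, $|x|\le c$, with $p\,(\text{tail already read})\cdots$ — rather, at the positions that are multiples of roughly $2c$ I force $w'_g$ to pass through a point of the form ``prefix of $w$ completed by a $\le c$-suffix and then truncated again'', so that between consecutive such checkpoints $w'_g$ and $w$ stay within $O(c)$ of each other by the triangle inequality, while at the checkpoints $w'_g$ literally lies on a word of $L$. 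This guarantees the $4c$ bound: any prefix $u$ of $w'_g$ is within $2c$ of the previous checkpoint, which lies on some $L$-word, which by quasiregularity of $L$ extends by $\le c$ more, and the detour back onto $w'_g$ costs $\le c$ again — totalling $\le 4c$, a suffix of the remaining tail of $w'_g$ by construction.

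Next I would check the fellow traveler property. Take $w'_1,w'_2\in L'$ coming from $w_1,w_2\in L$ with $\pi(w_1)a=\pi(w_2)$. At time $t$, $w'_1(t)$ is within $2c$ of a checkpoint on $w_1$, i.e. within $2c+c=3c$ of some prefix $w_1(t_1)$ of $w_1$ with $|t-t_1|$ controlled — here I use that the checkpoint spacing is $O(c)$ so $t_1$ and $t$ differ by $O(c)$, hence $|t_1-t_2|=O(c)$ as well. Then $d_A(\pi(w'_1(t)),\pi(w'_2(t)))\le d_A(\pi(w_1(t_1)),\pi(w_2(t_2)))+O(c)$, and the latter is $\le d_A(\pi(w_1(\min(t_1,t_2))),\pi(w_2(\min(t_1,t_2))))+O(c)\le s_L(\min(t_1,t_2))+O(c)$, where $s_L$ is the fellow traveler function of $L$; since $s_L\preceq f$ and $\min(t_1,t_2)\le t$ with $f$ nondecreasing, we get $s_{L'}(t)\preceq f+O(c)\preceq f$ because $f\ll\mathfrak{i}$ forces $f$ to be unbounded (or, if $f$ were bounded, $f$ absorbs the constant anyway).

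The main obstacle I anticipate is exactly the bookkeeping that makes the detour word $x$ at a checkpoint a genuine \emph{prefix of the tail} of $w'_g$ rather than just some short word completing the prefix to an element of $L$: quasiregularity as stated only gives completion to $L$, not to $L'$, and not via a tail of the word being built. Resolving this is what dictates the checkpoint construction — one commits the completion $x$ \emph{into} $w'_g$ and then resynchronises with $w_g$, and one must argue resynchronisation is possible with bounded overhead (the resynchronising segment is $\pi(x)^{-1}\cdot(\text{a short piece of }w_g)$, which has length $\le c+O(c)$, using again quasiregularity to know $w_g$'s own prefixes are completable). A secondary nuisance is ensuring $L'$ is still \emph{one-to-one}: distinct $g$ give distinct $w'_g$ because $w'_g$ still determines $g=\pi(w'_g)$, and we pick exactly one such word per $g$ by fixing the greedy rule; so surjectivity and injectivity of $\pi_{L'}$ follow. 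I would present the construction with a fixed checkpoint spacing (say $2c$), prove the $4c$ quasiprefix--closedness as the first lemma-step, then the fellow traveler estimate as the second, both by the triangle-inequality arguments sketched above.
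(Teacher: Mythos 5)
Your construction is essentially the one in the paper: break $w \in L$ into blocks of length $O(c)$ and, after each block, insert a short ``out--and--back'' detour that visits a word of $L$ supplied by quasiregularity, so that every prefix of the modified word can be extended inside the word itself by at most $4c$ letters to a member of the new language. You also correctly isolate the main subtlety, namely that the completion $x$ given by quasiregularity must be turned into a prefix of the tail of the word being built; committing the detour into $w'$ at each checkpoint is exactly how the paper resolves this (it writes the detour as $q(u)=xyy^{-1}x^{-1}$ and checks in three cases that the constant $4c$ suffices).

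There is, however, a genuine gap in your fellow--traveler step. You assert that at time $t$ the corresponding positions $t_1,t_2$ on $w_1,w_2$ satisfy $|t_1-t_2|=O(c)$ ``because the checkpoint spacing is $O(c)$''. For the construction as you describe it this is false: the detour inserted at checkpoint $i$ has length $2|x_i|$, and $|x_i|$ depends on the word, so the detours along $w_1$ may all be empty while those along $w_2$ all have length $2c$. After $k$ checkpoints the two reading positions then drift apart by $2ck$, i.e.\ linearly in $t$, and $d_A(\pi(w_1(t_1)),\pi(w_2(t_2)))$ can be linear in $t$ even though $s_L\preceq f\ll\mathfrak{i}$; the modified normal form would then fail the $f(n)$--fellow traveler property. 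The missing ingredient, which is precisely what the paper's proof supplies, is to pad every detour to the \emph{same} length $2c$: taking $q(u)=xyy^{-1}x^{-1}$ with $|y|=c-|x|$ makes $w\mapsto w'$ stretch every word by the identical deterministic pattern, so that $|t_1-t_2|=O(c)$, $t_1\leqslant t$, and $s_{L'}(t)\leqslant s_L(t)+O(c)\preceq f(t)$. (A smaller slip: even with padding, $t_1$ differs from $t$ by a multiplicative factor close to $\frac{c+1}{3c+1}$, not by $O(c)$; what the argument actually needs is only $t_1\leqslant t$ together with the bound on $|t_1-t_2|$.)
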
	
 \begin{proof} 
    First we note that if $c = 0$, then the 
    normal form defined by the language 
    $L$ is prefix--closed. So we further  
    assume that $c>0$. 
 	Now let $u$ be a prefix of length $k(c+1)$ of a 
 	word $w = u v \in L$ for an integer 
 	$k \geqslant 0$.   	 
 	Since $L$ is a quasiregular normal form, 
 	there exists $x \in S^*$ 
 	of length $|x| \leqslant c$
 	for which $ux \in L$. 
 	Let $y \in S^*$ be any word of length 
 	$c-|x|$.  
	We define $q(u)$ to be the word 
	$xyy^{-1}x^{-1} \in S^*$, where $x^{-1}$ and 
	$y^{-1}$ are the inverses of $x$ and $y$,  
	respectively. The word $q(u)$ depends on 
	$u$ only. 
	
    A word $w \in L$ can be always written 
    as a concatenation 
    $w = u_1 u_2 \dots u_m t$ of words $u_1,\dots,u_m \in S^*$ and 
    $t \in S^*$, where 
    $|u_1| = \dots = |u_m| = c+1$ and 
    $0 \leqslant |t| < c + 1$.  
    For each word $w \in L$ we construct 
    a word $w'$ as follows: 
    \begin{equation}
    \label{from_w_to_wprime}   
       w' = u_1 q(u_1)  \dots 
            u_{m-1} q(u_1 \dots u_{m-1}) u_m  t.
    \end{equation} 
    We define a language $L'$ 
    as $L' = \{w' \, | \, w \in L\}$.  
    Let us show that $L'$ is a  
    quasiprefix--closed normal form 
    for the constant $C = 4c$.  
    We denote by $x_i,y_i \in S^*$  
    the words for which 
    $q(u_1 \dots u_i) = x_i y_i y_i^{-1}x_i^{-1}$.
    Let $u'$ be a prefix of $w'$.
    There are the following  three possibilities.
    \begin{itemize}
    \item{The prefix $u'$ is of the form 
    $u' = u_1 q(u_1) \dots 
         u_{i-1} q(u_1 \dots u_{i-1}) p_i$, where 
    $i=1,\dots,m-1$ and 
    $p_i$ is a prefix of $u_i$. 
    By the definition of $x_i$, we have 
    that $u_1 \dots u_{i-1}u_i x_i \in L$. 
    Let $u'' = u_1 q(u_1) \dots 
         u_{i-1}q(u_1 \dots u_{i-1}) u_i x_i$. 
    It can be seen that $u'' \in L'$, 
    $u'$ is a prefix of $u''$ and    
    $u''$ is a prefix of $w'$. 
    Let $u_i  = p_i \tau_i$. 
    Then $u'' = u' \tau_i x_i$, 
    so $|u''|-|u'| = |\tau_i| + |x_i| 
        \leqslant \left(c+1\right) + c 
        \leqslant 4c$.}
    \item{The prefix $u'$ is of the form 
    $u' = u_1 q(u_1) \dots u_{i-1}q(u_1\dots u_{i-1})u_ip_i$, where 
    $i =1,\dots,m-1$ and 
    $p_i$ is a prefix 
    of $q(u_1 \dots u_i) = x_i y_i 
    y_i^{-1} x_i ^{-1}$. 
    If $p_i$ is a prefix of $x_i$, then we put 
    $u'' = u_1 q(u_1) \dots u_{i-1} 
     q(u_1 \dots u_{i-1})u_ix_i$.   
    If $p_i$ is not a prefix of $x_i$, 
    then we put $u'' = u_1 q(u_1) \dots u_{i-1} 
    q(u_1 \dots u_{i-1})u_i 
    q(u_1 \dots u_{i})\gamma_{i}$, where 
    $\gamma_{i} = u_{i+1} x_{i+1}$ if 
    $i<m-1$ and $\gamma_{i} = u_m t$ if $i=m-1$. It can be seen that 
    $u'' \in L '$, $u'$ is a prefix of 
    $u''$ and $u''$ is a prefix of $w'$. 
    In the first case when $p_i$ is a prefix 
    of $x_i$, we have that $|u''|-|u'| \leqslant |x_i| \leqslant c$. 
    In the second case when $p_i$ is not a prefix
    of $x_i$, we have that 
    $|u''|-|u'| \leqslant 
     (2c-1) + (c+1) + c =4c$.}
    \item{The prefix $u'$ is of the form 
    $u'= u_1 q(u_1)  \dots 
    u_{m-1} q(u_1 \dots u_{m-1})p_m$, 
    where $p_m$ is a prefix of $u_mt$. Let $u'' =w'$. Then $u'' \in L$, 
    $u'$ is a prefix of $u''$ and $u''$ is 
    a prefix of $w'$. 
    We have that $|u''|-|u'| \leqslant (c+1) +c \leqslant 4c$.}            
    \end{itemize}         
   It is 
   straightforward from \eqref{from_w_to_wprime}
   that if 
   $L$ satisfies the $f(n)$--fellow
   traveler property, then 
   $L'$ also satisfies the 
   $f(n)$--fellow traveler property.   
 \end{proof}

\section{Quasigeodesic Normal Forms}
\label{quasigeodesic_sec}

This section discusses quasigeodesic 
normal forms in the context of 
the $f(n)$--fellow traveler property. 
In the subsection \ref{non-exist-thm}
we show that  groups with the
strongly--super--polynomial Dehn function 
and non--finitely presented groups 
do not admit quasigeodesic normal forms
satisfying the $f(n)$--fellow traveler 
property. In the subsection \ref{cayley_dist_sec} 
we show relation with the notion of 
a Cayley distance function.

\subsection{Non--existence theorems}
\label{non-exist-thm}

This subsection presents  non--existence 
theorems for quasigeodesic normal forms 
satisfying the $f(n)$--fellow traveler property. 
First we consider finitely presented groups. We show that if the Dehn function of a 
group is strongly--super--polynomial (see 
Definition \ref{ssp_def}), then it does not 
admit a quasigeodesic normal form 
satisfying the $f(n)$--fellow traveler 
property (see Theorem \ref{ssp_no_quasidesic_thm}).
Then we consider non--finitely presented 
groups. We show that they do not admit 
quasigeodesic normal forms satisfying the 
$f(n)$--fellow traveler property 
(see Theorem \ref{nonfp_no_quasigeodesic_thm}).

\vskip1mm

{\it Finitely presented groups}. 
Let $G$ be a 
group.
We assume first that $G$ is 
finitely presented.
Let $G =\langle A \, |\, R \rangle$ be a 
finite presentation of $G$.  
We denote by $F_A$ a free group on $A$. 
Let $w \in F_A$ be a reduced word for which 
$\pi(w)=e$ in the group $G$. 
Let us recall the definitions of 
a combinatorial area  and 
the Dehn function.  
\begin{definition}[combinatorial area] 
	The area $\mathcal{A}(w)$ with respect 
	to the presentation 
	$\langle A  \, | \,R \rangle$ 
	is the minimum $N$ for which 
	$w = \prod\limits_{i=1}^{N}    \tau_i^{-1}r_i^{\pm 1}\tau_i$ 
	in $F_A$, where $r_i \in R$ 
	and $\tau_i \in F_A$.
\end{definition}
\begin{definition}[Dehn function] 
	The Dehn function of $G$ with respect to 
	the presentation $\langle A \, | \, R \rangle$ is the function 
	$\delta : \mathbb{N} \rightarrow \mathbb{N}$ 
	such that 
	$\delta (n) = 
	\max \{ \mathcal{A}(w) \,|\, w \in F_A, |w|\leqslant n\}$.  
\end{definition} 

\noindent Now we recall the notion of 
a strongly--super--polynomial function
introduced in \cite{BET22}. 
\begin{definition}[strongly--super--polynomial function]
	\label{ssp_def}	
	A non--zero function 
	$f \in \mathcal{F}$ is 
	said to be 
	strongly--super--polynomial 
	if $n^2 f \ll f$. 
\end{definition}	
\noindent Note that for every $c>0$, one has that 
$n^2 f \ll f$ if and only if $n^c f \ll f$ \cite{BET22}. 
Strongly--super--polynomial 
functions include, for example, exponential functions.

\vskip1mm 

The following lemma is 
a key ingredient in the proof of 
Theorem \ref{ssp_no_quasidesic_thm}.

\begin{lemma} 
\label{dehn_ineq_quasigeodesic}    
   If a group $G = \langle A \, |\, R \rangle$ 
   admits a quasigeodesic 
   normal form satisfying the 
   $f(n)$--fellow traveler property, then 
   there exist constants $C,D>0$ and 
   $n_0 \geqslant 0$ such that
   for the Dehn function $\delta(n)$ 
   of $G$ with respect to the presentation
   $\langle A \, |\, R \rangle$ the inequality 
   $\delta(n) \leqslant D n^2 
   	  \delta (Cf(Cn))$
   holds for all $n \geqslant n_0$.   
\end{lemma}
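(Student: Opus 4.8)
The plan is to bound the area of a null-homotopic word $w$ by building a "fan" of $\pi_1$-filling triangles between consecutive normal forms, using the quasigeodesic hypothesis to control the number of triangles and the $f(n)$-fellow traveler hypothesis to control the perimeter of each triangle. Concretely, let $w = s_1 \cdots s_n \in F_A$ be reduced with $\pi(w)=e$ and $|w| = n$. For each $i \in \{0,1,\dots,n\}$ let $g_i = \pi(s_1\cdots s_i)$, so $g_0 = g_n = e$, and let $w_i \in L$ be the normal form of $g_i$. Each pair $(g_{i-1}, g_i)$ differs by the generator $s_i \in S$ (allowing a symmetric version of the definition of $s(n)$ for $a \in A^{-1}$, which changes constants only), so $w_{i-1}$ and $w_i$ are synchronous fellow travelers at distance $\preceq f$. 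The loop $w$ is then freely equal in $F_A$ to the concatenation of the $n$ loops $\ell_i := w_{i-1}\, s_i\, w_i^{-1}$, and $\mathcal{A}(w) \leqslant \sum_{i=1}^n \mathcal{A}(\ell_i)$ up to absorbing a bounded additive contribution (the loop $w_0 s_1 w_1^{-1} \cdots$ telescoping is exact in $F_A$, so this is an equality of the relevant bound).

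Next I would bound each $\mathcal{A}(\ell_i)$. Since the normal form is quasigeodesic, $|w_{i-1}|, |w_i| \leqslant C'(d_A(g_i) + 1) \leqslant C'(|w| + 1) \leqslant C' n + C'$ for all $i$, using $d_A(g_i) \leqslant i \leqslant n$. So each $\ell_i$ has length $\leqslant C'' n$ for a constant $C''$. A crude bound would give $\mathcal{A}(\ell_i) \leqslant \delta(C'' n)$ and hence $\mathcal{A}(w) \leqslant n\,\delta(C'' n)$, but that is not strong enough — we need the $\delta(Cf(Cn))$ on the right. The fix is the standard fellow-traveler filling argument: slice $\ell_i$ along the synchronous positions $t = 0,1,\dots$, connecting $w_{i-1}(t)$ to $w_i(t)$ by a geodesic word $\gamma_t$ of length $d_A(\pi(w_{i-1}(t)), \pi(w_i(t))) \leqslant s(|w|) \leqslant K f(M n)$ (coarsely; using $s \preceq f$). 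Consecutive slices $\gamma_{t-1}$ and $\gamma_t$ together with the two letters $w_{i-1}(t)/w_{i-1}(t-1)$ and $w_i(t)/w_i(t-1)$ bound a quadrilateral of perimeter at most $2 + 2Kf(Mn) \leqslant C f(Cn)$ (adjusting $C$ and using $f \succeq 1$ eventually, or absorbing the additive $2$). Hence $\mathcal{A}(\ell_i) \leqslant (\text{number of slices}) \cdot \delta(Cf(Cn)) \leqslant C'' n \cdot \delta(Cf(Cn))$, since the number of slices is at most $\max(|w_{i-1}|,|w_i|) \leqslant C'' n$.

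Combining, $\mathcal{A}(w) \leqslant \sum_{i=1}^n \mathcal{A}(\ell_i) \leqslant n \cdot C'' n \cdot \delta(Cf(Cn)) = C'' n^2 \delta(Cf(Cn))$, and taking the maximum over all reduced $w$ with $|w| \leqslant n$ gives $\delta(n) \leqslant D n^2 \delta(Cf(Cn))$ with $D = C''$, for all $n \geqslant n_0$ where $n_0$ absorbs the threshold in the coarse inequality $s \preceq f$ and ensures $f(Cn) \geqslant 1$ and $Cf(Cn) \geqslant |w_0|$. The main obstacle I anticipate is purely bookkeeping: making the slicing argument honest when $|w_{i-1}| \neq |w_i|$ (so one path has already stopped while the other is still moving — this is handled because $w_j(t) = w_j$ for $t > |w_j|$, and the extra slices contribute degenerate quadrilaterals of perimeter $\leqslant 2 + Kf(Mn)$), and tracking that all additive constants in perimeters can be folded into the multiplicative constant $C$ by enlarging $n_0$ so that $f(Cn)$ dominates any fixed constant. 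No single step is deep; the content is the two-parameter estimate (number of triangles $\sim n^2$, perimeter $\sim f(Cn)$) which is exactly what produces the $n^2 \delta(Cf(Cn))$ shape.
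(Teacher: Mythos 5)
Your proposal is correct and follows essentially the same route as the paper's proof: decompose the loop into the $n$ subloops $w_{i-1}s_iw_i^{-1}$, slice each one along synchronous positions into quadrilaterals whose perimeters are controlled by $s(\cdot)\preceq f$ and whose number is controlled by the quasigeodesic bound $|w_i|\leqslant C''n$, yielding at most $Dn^2$ cells of perimeter at most $Cf(Cn)$. The only nitpick is that the geodesic connector at slice $t$ should be bounded by $s(C''n)$ rather than $s(|w|)$ since $t$ may exceed $n$, but as you already note the number of slices is at most $C''n$, this is absorbed into the constants of the coarse inequality.
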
	
\begin{proof} 
   Suppose that $G$ admits a quasigeodesic
   normal form satisfying the $f(n)$--fellow traveler property. 
   Let $L \subseteq S^*$ be a language defining
   such normal form in $G$.   	
   Now let $w = s_1 \dots s_n \in S^*$ be a word for which  $\pi (w) =e$ in the group $G$, where 
   $n = |w|$. 
   For each $i=1,\dots,n-1$, let 
   $u_i \in L$ be the normal form of the 
   group element $g_i = \pi (s_1 \dots s_i)$.
   We first divide a loop  $w$ into subloops: $s_1 u_1^{-1}$, $u_1 s_2 u_2^{-1},\dots,u_{n-2}s_{n-1}u_{n-1}^{-1}$, $u_{n-1}s_n$.   
   
   For a given $i \in \{1,\dots, n-1\}$
   and $j \geqslant 1$ we denote by  
   $u_{i,j} \in S$ the $j$th symbol in the normal form $u_i \in S^*$, if 
   $j \leqslant |u_i|$, and 
   $u_{i,j} = e$, if $j >|u_i|$. 
   In particular, a prefix of $u_i$ of length 
   $j \leqslant |u_i|$ 
   is  $u_{i,1} \dots u_{i,j}$. 
   For a given $i \in \{1,\dots,n-2\}$ 
   and $j \geqslant 1$, let  
   $v_{i,j} \in S^*$ be a shortest path
   connecting   
   $g_{i,j} = \pi(u_{i,1} \dots u_{i,j})$ and 
   $g_{i+1,j} = \pi(u_{i+1,1} \dots u_{i+1,j})$:
   $g_{i,j}  \cdot v_{i,j} =
    g_{i+1,j}$.
   For illustration see Figure \ref{loop}. 
   \begin{figure}
   	\centering 	
   	\includegraphics[width=6.5cm]{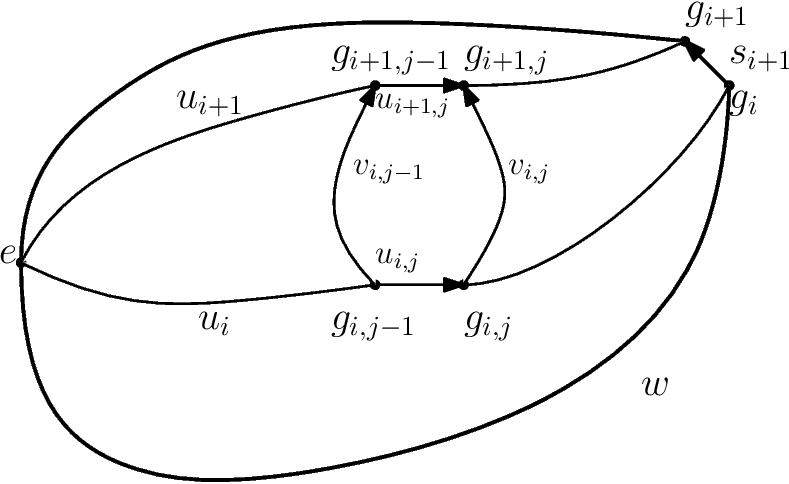}
   	\caption{The outer cycle shows 
    a loop $w = s_1 \dots s_{i} s_{i+1} 
            s_{i+2} \dots s_n$. 
     The directed edge labeled by $s_{i+1}$  
     leads from the group element 
     $g_i$ to 
     the group element 
     $g_{i+1}$.
     The curves $u_{i}$ and 
     $u_{i+1}$ show normal forms 
     of the group elements $g_i$ and 
     $g_{i+1}$, respectively. 
     The curves $v_{i,j}$ and 
     $v_{i,j-1}$ show shortest paths 
     between $g_{i,j}, g_{i+1,j}$ 
     and $g_{i,j-1}, g_{i+1,j-1}$, 
     respectively. 
     The directed edges labeled by 
     $u_{i,j}$ and $u_{i+1,j}$ lead 
     from $g_{i,j-1}$ to 
     $g_{i,j}$ and from $g_{i+1,j-1}$ 
     to $g_{i+1,j}$, respectively.}
   	\label{loop}
   \end{figure}

   For a given $i \in \{ 1,\dots,n-2 \}$, let 
   $m_i = \max \{|u_i|, |u_{i+1}|\} -1$. 
   Now we divide each subloop 
   $u_i s_{i+1} u_{i+1}^{-1}, 
    i = 1, \dots, n-2$ into 
   smaller subloops $w_{i,j}, 
   j=1, \dots, m_i$:  
   if $j=1$, then $w_{i,1} = u_{i,1} v_{i,1} u_{i+1,1}^{-1}$, if 
   $2 \leqslant j \leqslant m_i-1$,
   then $w_{i,j}= u_{i,j} v_{i,j} u_{i+1,j}^{-1} v_{i,j-1}^{-1}$ 
   and if $j=m_i$, then 
   $w_{i,m_i}=u_{i,m_i } s_{i+1}
   u_{i+1,m_i}^{-1}
   v_{i,m_i}^{-1}$.
   Therefore, for the area 
   $\mathcal{A}(w)$ 
   we have:
   $
     \mathcal{A}(w) \leqslant 
     \mathcal{A}(s_1u_1^{-1}) + 
     \mathcal{A}(u_{n-1}s_n)  +
     \sum\limits_{i=1}^{n-2} 
     \sum\limits_{j=1}^{m_i} 
     \mathcal{A}(w_{i,j})$.

   Let $s(n)$ be the function defined 
   by the equation \eqref{def_function_f_1}
   for the normal form $L$ and 
   the set of generators $A$. 
   We have:  
   $|w_{i,j}| \leqslant 
     s(j) + s (j-1) + 2$ for 
    $1 \leqslant j \leqslant m_i -1$ and 
   $|w_{i,m_i}| \leqslant 3 + s(j-1)$
   for $j = m_i$.
   Since the normal form $L$ is 
   quasigeodesic, there exists a 
   positive integer  
   $D$ such that  
   $D n \geqslant m_i$ for all 
   $i=1,\dots, n-2$. 
   
   If $s(n)$ is an unbounded function 
   we may assume that $s(D) \geqslant 
   \max\{|s_1 u_1^{-1}|,|u_{n-1}s_n|\}$; 
   in particular, $s(D) \geqslant 1$.  
   The total number of terms 
   in the expression 
   $\mathcal{A}(s_1u_1^{-1}) + 
   \mathcal{A}(u_{n-1}s_n)  +
   \sum\limits_{i=1}^{n-2} 
   \sum\limits_{j=1}^{m_i} 
   \mathcal{A}(w_{i,j})$ is 
   at most $Dn^2$ and each term is bounded 
   from above by $4s(Dn)$. Therefore, 
   $\mathcal{A}(w) 
    \leqslant Dn^2 \delta (4s(Dn))$ 
   which implies that 
   $\delta(n) \leqslant Dn^2 
    \delta(4 s(Dn))$ for all $n$.
   Since the normal form $L$ satisfies the
   $f(n)$--fellow traveler property, we 
   have that $s \preceq f$ which implies 
   that there exists a constant $C_0>0$ and 
   $n_0 \geqslant 0$ such that 
   $s(n) \leqslant C_0 f(C_0 n)$ for all 
   $n \geqslant n_0$. Therefore, 
   $\delta (n) \leqslant 
   D n^2 \delta (4 s (Dn)) \leqslant 
   D n^2 \delta (4 C_0 f(C_0 D n))$. 
   Let $C= \max\{4C_0, C_0D \}$. 
   Then we have that 
   $\delta (n) \leqslant D n^2 
    \delta (C f(Cn))$ for all 
    $n \geqslant n_0$.  
   
   If the function $s(n)$ is 
   bounded from 
   above by a constant, 
   then we immediately obtain that 
   $\delta(n)= O(n^2)$. 
   In particular, one can  
   always get that 
   $\delta(n) \leqslant D n^2 
    \delta (Cf(Cn))$ 
   for all $n \geqslant n_0$
   for some integer constants $C,D>0$ and 
   $n_0 \geqslant 0$.   
\end{proof}

\begin{theorem}
\label{ssp_no_quasidesic_thm}   
   If the Dehn function of a group 
   $G = \langle A \, |\, R \rangle$ with respect 
   to the presentation $\langle A \, |\, R \rangle$ 
   is strongly--super--polynomial, then $G$ does not admit 
   a quasigeodesic normal form that satisfies the $f(n)$--fellow traveler 
   property.     
\end{theorem}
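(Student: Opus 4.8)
The plan is to argue by contradiction, treating the theorem as an essentially formal consequence of Lemma \ref{dehn_ineq_quasigeodesic}: I will feed the Dehn inequality produced by that lemma into the defining inequality of a strongly--super--polynomial function and squeeze out an unbounded quantity that is forced to be bounded.

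First I would suppose, contrary to the claim, that $G$ admits a quasigeodesic normal form satisfying the $f(n)$--fellow traveler property. Lemma \ref{dehn_ineq_quasigeodesic} then gives constants $C,D>0$ and $n_0\geqslant 0$ such that
\[
   \delta(n)\leqslant D n^2\,\delta\bigl(Cf(Cn)\bigr)\qquad\text{for all }n\geqslant n_0,
\]
where $\delta$ is the Dehn function of $\langle A\mid R\rangle$, which I regard as extended to a nondecreasing function on $\mathbb{R}_+$ (so that $\delta(Cf(Cn))$ makes sense). Substituting $Mn$ for $n$, with $M$ the constant to be produced next, yields $\delta(Mn)\leqslant DM^2 n^2\,\delta\bigl(Cf(CMn)\bigr)$ for all large $n$.

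Next I would unfold the hypothesis that $\delta$ is strongly--super--polynomial, i.e.\ $n^2\delta\ll\delta$ (Definition \ref{ssp_def}): there is an unbounded $t\in\mathcal{F}$ and positive integers $K,M$ (and a threshold $N$) with $n^2\delta(n)t(n)\leqslant K\,\delta(Mn)$ for all $n\geqslant N$. Chaining this with the previous display gives, for all sufficiently large $n$,
\[
   n^2\delta(n)t(n)\leqslant K\,\delta(Mn)\leqslant KDM^2 n^2\,\delta\bigl(Cf(CMn)\bigr),
\]
and cancelling $n^2$ leaves $\delta(n)t(n)\leqslant KDM^2\,\delta\bigl(Cf(CMn)\bigr)$.

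The last step — the only place where the precise form of the $f(n)$--fellow traveler property enters — is to note that $f\ll\mathfrak{i}$ forces $Cf(CMn)<n$ for all large $n$: writing $f(m)t'(m)\leqslant am$ for an unbounded $t'\in\mathcal{F}$ and a constant $a$, one gets $Cf(CMn)\leqslant aC^2Mn/t'(CMn)$, and $t'(CMn)>aC^2M$ eventually. (This is exactly where a function with $f\prec\mathfrak{i}$ but $f\not\ll\mathfrak{i}$ would fail, cf.\ Example \ref{example_ll_vs_prec}, which is why the hypothesis of Definition \ref{fellow_traveler} was chosen as $f\ll\mathfrak{i}$.) Since $\delta$ is nondecreasing, $\delta\bigl(Cf(CMn)\bigr)\leqslant\delta(n)$ for such $n$, hence $\delta(n)t(n)\leqslant KDM^2\delta(n)$; and because $\delta$ is a non--zero nondecreasing function, $\delta(n)>0$ for large $n$, so dividing gives $t(n)\leqslant KDM^2$, contradicting the unboundedness of $t$. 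I do not expect a serious obstacle here: the real work is already done in Lemma \ref{dehn_ineq_quasigeodesic}, and the only mildly delicate points in the present argument are the bookkeeping of the various "for all large $n$" thresholds and the rounding involved in evaluating the integer--valued $\delta$ at the real argument $Cf(Cn)$.
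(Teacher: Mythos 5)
Your proposal is correct and follows essentially the same route as the paper: both proofs chain the inequality of Lemma \ref{dehn_ineq_quasigeodesic} with the defining inequality $n^2\delta(n)t(n)\leqslant K\delta(Mn)$ of a strongly--super--polynomial function, cancel $n^2$, and then use $f\ll\mathfrak{i}$ together with the monotonicity of $\delta$ to force the unbounded factor to be bounded. The only (harmless) difference is the order of the endgame --- the paper first deduces $n\leqslant Cf(CMn)$ from the combined inequality and then contradicts $f\ll\mathfrak{i}$, whereas you first establish $Cf(CMn)<n$ from $f\ll\mathfrak{i}$ and then contradict the unboundedness of $t$.
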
	
\begin{proof}
   Let $\delta(n)$ be the Dehn 
   function of 
   a group $G$ with respect to a presentation
   $\langle A \, | \, R \rangle$. 
   Since the Dehn function $\delta(n)$ is 
   strongly--super--polynomial, 
   $n^2 \delta (n) \ll \delta (n)$.  
   Therefore,  there exist  
   an unbounded function $t(n) \in \mathcal{F}$
   and constants $K, M, N_0$ such that for all $n \geqslant N_0$:    
   \begin{equation}
   \label{thmA_super_ineq}  
      n^2 \delta (n) t(n) \leqslant K \delta (Mn).
   \end{equation}

   We prove the theorem by 
   contradiction. 
   Assume that $G$ admits a 
   quasigeodesic
   normal form satisfying the 
   $f(n)$--fellow traveller property for some 
   function $f \ll \mathfrak{i}$.   
   By Lemma \ref{dehn_ineq_quasigeodesic} 
   there exist 
   integer constants 
   $C,D > 0$ and $N_1 \geqslant 0$ such that
   for all $n \geqslant N_1$: 
   \begin{equation} 
   \label{thmA_prop_ineq}  
     \delta (n) \leqslant D n^2 \delta (Cf(Cn)).
   \end{equation}   
   Let $N_2 = \max \{N_0, N_1\}$. 
   By the inequalities \eqref{thmA_super_ineq} and 
   \eqref{thmA_prop_ineq} we obtain that 
   for all $n \geqslant N_2$:  
   $$
     n^2 \delta(n) t(n) \leqslant K \delta (M n) 
     \leqslant D K M^2 n^2 \delta  (Cf (CMn)). 
   $$ 
   Therefore, 
   $\delta(n) t(n) 
    \leqslant  D K M^2 \delta (Cf(CMn))$ 
   for all $n \geqslant N_2$.  
   Let $N_3 = \min \{ n \, | \, 
        2 DKM^2 \leqslant t(n)\}$ and
       $N_4 = \max \{ N_2, N_3\}$.   
   Then, $2 \delta (n) \leqslant \delta (Cf(CMn)) $ 
   for all $n \geqslant N_4$. 
   
   Let $N_5 =  \min \{ n \, | \, \delta (n) \geqslant 1\}$
   and $N_6 = \max \{N_4, N_5\}$. 
   Then, $n \leqslant C f (CMn)$ for all $n \geqslant N_6$ as if, otherwise, 
   $n > Cf (CMn)$ for some $n \geqslant N_6$,  then $2 \delta (n) \leqslant \delta(Cf(CMn)) \leqslant \delta (n)$  which leads to a contradiction with the inequality 
   $\delta (n) \geqslant 1$. 
   Since $f \ll \mathfrak{i}$, there exists 
   an unbounded function $\tau (n) \in \mathcal{F}$ 
   and constants $E,N_7$ such that  
   $f(CMn)\tau(CMn) \leqslant En$ 
   for all $n \geqslant N_7$. 
   Let $N_8 = \max \{N_6, N_7\}$. From the inequalities 
   $n \leqslant Cf (CMn)$ and $f(CMn)\tau(CMn) \leqslant En$ 
   we conclude that 
   $n \tau (CMn) \leqslant C E n$ for all 
   $n \geqslant N_8$. As the function 
   $\tau (n)$ is unbounded, we get a contradiction. 
\end{proof}

{\it Non--finitely presented groups}. 
Now we assume that $G$ is 
a non--finitely presented group.  
In this case no additional assumptions 
are needed to show     
non--existence Theorem \ref{nonfp_no_quasigeodesic_thm}. 
Equivalently, Theorem \ref{nonfp_no_quasigeodesic_thm} 
claims that if $G$ is a finitely generated group
admitting a quasigeodesic normal form with the 
$f(n)$--fellow traveler property, then $G$ is
finitely presented.
\begin{theorem} 
\label{nonfp_no_quasigeodesic_thm}   
   If $G$ is a non--finitely 
   presented group, then 
   $G$ does not admit a 
   quasigeodesic normal form 
   that satisfies the 
   $f(n)$--fellow traveller property.  
\end{theorem}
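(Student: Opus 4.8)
The strategy is to prove the contrapositive: if $G$ admits a quasigeodesic normal form $L \subseteq S^*$ satisfying the $f(n)$--fellow traveler property for some $f \ll \mathfrak{i}$, then $G$ is finitely presented. To do this I will use the construction already carried out in the proof of Lemma~\ref{dehn_ineq_quasigeodesic}. That construction does not actually require a finite presentation to begin with: given any word $w = s_1 \dots s_n \in S^*$ with $\pi(w) = e$, it exhibits an explicit way to fill the loop $w$ by subloops $s_1 u_1^{-1}$, $u_i s_{i+1} u_{i+1}^{-1}$, $u_{n-1} s_n$, and then subdivides each of these into the small quadrilateral/triangle loops $w_{i,j}$, whose perimeters are bounded by $s(j) + s(j-1) + 2 \leqslant 4 s(Dn)$ (once $s$ is unbounded; the bounded case is even easier since $G$ is then automatic, hence finitely presented). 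The key point is that \emph{all} these small loops have length bounded by a single function of $n$ — in fact by $N := 4s(Dn)$, and by quasigeodesity there are at most $D n^2$ of them.

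Now I would take the candidate presentation to be $\langle A \mid R_0 \rangle$, where $R_0$ is the (finite) set of all words over $S$ of length at most some fixed bound that represent the identity in $G$ — more precisely, I will show that a finite bound on the lengths of ``defining relators'' suffices. The heart of the argument: fix any $n_0$ large enough that the $f(n)$--fellow traveler bound $s(n) \leqslant C_0 f(C_0 n)$ holds and that $f \ll \mathfrak{i}$ forces $4C_0 f(C_0 D n) < n$ for $n \geqslant n_0$ (such $n_0$ exists exactly because $f \ll \mathfrak{i}$ — this is the same mechanism used at the end of the proof of Theorem~\ref{ssp_no_quasidesic_thm}). Set $N_\ast = \max\{\, 4 s(D n) : n \leqslant n_0 \,\}$, which is a finite number, and let $R_0$ be the set of all words over $S$ of length $\leqslant N_\ast$ representing $e$ in $G$; this is a finite set. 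I claim $\langle A \mid R_0 \rangle$ presents $G$. Indeed, every relator $w$ of length $n \leqslant n_0$ lies in $R_0$ by definition, and every relator $w$ of length $n > n_0$ is, by the decomposition above, a product of conjugates of subloops $w_{i,j}$ each of length $\leqslant 4 s(D n)$; but $4 s(Dn) \leqslant 4 C_0 f(C_0 D n) < n$, so each $w_{i,j}$ is a shorter relator and, by induction on $n$, is a consequence of $R_0$. Hence $w$ is a consequence of $R_0$, and $G = \langle A \mid R_0 \rangle$ is finitely presented.

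The main obstacle — and the point requiring care — is twofold. First, one must make sure the induction is genuinely on a strictly decreasing quantity: this is precisely where $f \ll \mathfrak{i}$ (rather than merely $f \prec \mathfrak{i}$) is essential, since it is what guarantees $4 C_0 f(C_0 D n) < n$ for all sufficiently large $n$, and without this strict decrease the recursion would not terminate. Second, the decomposition from Lemma~\ref{dehn_ineq_quasigeodesic} must be checked to go through with no hidden appeal to finite presentedness: the normal forms $u_i$ exist for purely combinatorial reasons, the shortest connecting paths $v_{i,j}$ exist in the Cayley graph, and the equality $w = \prod (\text{conjugates of } w_{i,j}) \cdot (s_1 u_1^{-1}) \cdot (u_{n-1} s_n)$ holds in the free group $F_A$ — all of which is fine. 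Once these two points are secured, the base case (finitely many short relators) plus the inductive step gives the result.
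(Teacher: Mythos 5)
Your proposal is correct and is essentially the paper's own argument recast as a contrapositive: the paper uses the identical loop subdivision from Lemma~\ref{dehn_ineq_quasigeodesic} and the identical bound $4s(Dn)<n$ (forced by $f\ll\mathfrak{i}$), but packages it as a contradiction with the existence of infinitely many relators admitting no decomposition into strictly shorter relators --- which is precisely the obstruction your induction rules out. Two cosmetic points: take $R_0$ to be all relators of length at most $\max\{n_0,N_\ast\}$ so the base case is literally covered, and in the bounded-$s(n)$ aside $G$ need not be automatic since $L$ is not assumed regular (the same decomposition handles that case directly anyway).
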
	

\begin{proof} 
   First we notice that there exist 
   infinitely many words $w \in S^*$
   for which $\pi(w)=e$  
   such that for every decomposition of $w$ 
   as the product 
   $w = \prod\limits_{j=1}^{k}r_j^{-1}u_j r_j$, 
   where $r_j,u_j \in S^*$ and 
   $\pi(u_j) = e$ for  $j=1,\dots,k$,
   for some  $1 \leqslant m \leqslant k$ 
   the length of $u_m$ is greater than 
   or equal to the length of $w$: 
   $|u_m| \geqslant |w|$.
   Indeed, if such infinitely many words
   did not exist, the group $G$ would be 
   finitely presented. 
   We denote the set of such words 
   by $W$.

   We prove the theorem by contradiction. 
   Assume that $G$ admits a 
   quasigeodesic  normal form satisfying
   the $f(n)$--fellow traveller property  
   for some function $f \ll \mathfrak{i}$. 
   Let $L \subseteq S^*$ be a language 
   defining such normal form in $G$. 
   Let $w = s_1 \dots s_n$ be a word from the 
   set $W$. 
   For each $i=1,\dots,n-1$, let 
   $u_i \in L$ be the normal form of the 
   group element $g_i = \pi (s_1 \dots s_i)$. 
   Similarly to Lemma 
   \ref{dehn_ineq_quasigeodesic},
   we divide a loop $w$ into subloops: 
   $s_1 u_1^{-1}$, 
   $u_1 s_2 u_2^{-1}$,
   \dots, 
   $u_{n-2}s_{n-1}u_{n-1}^{-1}$, 
   $u_{n-1}s_n$. 
   For a given 
   $i \in \{1, \dots, n-1 \}$ and 
   $j \geqslant 1$ we denote by
   $u_{i,j} \in S$ the $j$th symbol 
   in the normal form $u_j \in S^*$, 
   if $j \geqslant |u_i|$, and 
   $u_{i,j}=e$, if $j > |u_i|$. 
   For a given $i \in \{1,\dots,n-2\}$ and 
   $j \geqslant 1$, we denote by 
   $v_{i,j} \in S^{*}$ a shortest path 
   connecting 
   $g_{i,j} = \pi (u_{i,1} \dots u_{i,j})$ and
   $g_{i+1,j} = \pi(u_{i+1,1} \dots u_{i+1,j})$.  
   
   For a given $i \in \{1, \dots, n-2\}$, let
   $\ell_i = \max \{|u_i|, |u_{i+1}|\} -1$. 
   Similarly to the proof of 
   Lemma \ref{dehn_ineq_quasigeodesic},  
   let us divide each subloop 
   $u_i s_{i+1} u_{i+1}^{-1}$, 
   $i=1,\dots,n-2$ into smaller subloops 
   $w_{i,j},j=1,\dots,\ell_i$: if $j=1$, 
   then $w_{i,1} = u_{i,1}v_{i,1} u_{i+1,1}^{-1}$,
   if $2 \leqslant j \leqslant \ell_{i}-1$, then 
   $w_{i,j}=u_{i,j} v_{i,j} u_{i+1,j}^{-1} v_{i,j-1}^{-1}$ and if $j = \ell_i$, 
   then $w_{i,\ell_i}=
         u_{i,\ell_i}s_{i+1}u_{i+1,\ell_i}^{-1}
         v_{i,\ell_i}^{-1}$.

   Let $s(n)$ be the function defined 
   by the equation \eqref{def_function_f_1}
   for the normal form $L$ and 
   the set of generators $A$.
   Let $\ell = \max \{\ell_i \, | \, i =1, \dots,n-2 \}$. 
   Then, $|w_{i,j}| \leqslant 4 s(\ell)$ for all
   $i=1,\dots,n-2$ and $1 \leqslant j \leqslant \ell_i$, 
   where we assume that 
   $\ell$ is big enough so   
   $s(\ell) \geqslant 1$. 
   Since the normal form is quasigeodesic,
   there exists an 
   integer constant 
   $C>0$ for which $\ell_i \leqslant C n$ 
   for all $i=1,\dots, n-2$, so 
   $\ell \leqslant Cn$.   
            
   As $s \preceq f$ for some $f \ll \mathfrak{i}$, there exists an unbounded function $t \in \mathcal{F}$ for which  $s (n) t(n) \preceq \mathfrak{i}$. 
   In particular, $s(n) \leqslant \frac{1}{8C} n$ for all $n \geqslant N$ for some $N$. 
   Therefore, if $n \geqslant N$, then $|w_{i,j}| \leqslant \frac{n}{2}$
   for all  $i=1,\dots,n-2$ and 
   $1 \leqslant j \leqslant \ell_i$.   
   Furthermore, we may assume that $n$ is big enough 
   so $|s_1 u_1^{-1}|,|u_{n-1}s_n| \leqslant \frac{n}{2}$. Since $\frac{n}{2}<n$ and $w \in W$, 
   we get a contradiction.           
\end{proof}

 \subsection{Relation with a Cayley 
 	         distance function}
 \label{cayley_dist_sec}

  We find relation with the notion of 
  a Cayley distance function studied in 
  \cite{BET22,eastwest19,LATA18}. 
  A Cayley distance function 
  $h : \mathbb{N} \rightarrow 
       \mathbb{R}_{+}$ 
  is defined for an arbitrary bijection 
  $\psi : L \rightarrow G$ between 
  a language $L \subseteq S^*$ and 
  a group $G$ by the following identity: 
  \begin{equation*} 
  	 h(n) = \max \{  d_A (\psi(w),\pi(w)) 
  	 \,|\,  w \in L^{\leqslant n} \} \,\, 
  	 \mathrm{if} \,\, 
  	 L^{\leqslant n} \neq \varnothing,   
  \end{equation*} 	
  where 
  $L^{\leqslant n}  = 
   \{ w \, | \, |w| \leqslant n \}$ 
   is the set of words in $L$ 
   of length less than or equal to $n$,  
   and $h(n)=0$ if $L^{\leqslant n} = \varnothing$.  
  
  Cayley automatic groups were introduced 
  by Kharlampovich, Khoussainov and Miasnikov
  \cite{KKM11}. 
  We recall that a group $G$ is called Cayley automatic if there exists a bijection 
  $\psi : L \rightarrow G$ for which $L$ is 
  a regular language and for each 
  $a \in A$ the relation 
  $R_a = \{(u_1,u_2) \in L \times L \,| \, 
   \psi (u_1) a = \psi(u_2) \}$ 
   is recognized by 
  a two--tape synchronous automaton. 
  The bijection 
  $\psi : L \rightarrow G$ is referred to as a 
  Cayley automatic representation of $G$. 
  Cayley automatic groups extend automatic
  groups retaining exactly the same 
  computational model but allowing 
  an  arbitrary bijection $\psi: L 
  \rightarrow G$, 
  not only a canonical mapping 
  $\pi : L \rightarrow G$ like 
  in the notion of an automatic group.    
  
  In \cite{LATA18} it is asked if there 
  exists a Cayley automatic representation
  of a non--automatic group $G$    
  such that for the Cayley distance 
  function
  $h : \mathbb{N} \rightarrow \mathbb{R}_+$ 
  the inequality $h \prec \mathfrak{i}$ holds. 
  This problem can be slightly narrowed by 
  requiring that $h \ll \mathfrak{i}$. 
  Theorem \ref{relation_cayley_automatic_thm}
  shows that if such Cayley automatic 
  representation exists, then
  $G$ admits a quasigeodesic normal 
  form satisfying the $h(n)$--fellow 
  traveler property.

  \begin{theorem}
  \label{relation_cayley_automatic_thm} 	 
 	If a non--automatic group $G$ has a 
 	Cayley automatic representation 
 	$\psi : L \rightarrow G$ with the Cayley distance function 
 	$h \ll \mathfrak{i}$, 
 	then there exists a quasigeodesic normal form 
 	$L' \subseteq S^*$ that satisfies the $h(n)$--fellow traveler property.  
 \end{theorem}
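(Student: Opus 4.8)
The plan is to turn the Cayley automatic representation $\psi : L \to G$ into a genuine normal form by ``routing'' each word $w \in L$ to the group element $\pi(w)$ it actually spells out, but to do this we first need a new language whose words label paths that end at the same place as the original ones while having controlled length. The key point is that a Cayley automatic representation, being recognized by synchronous two-tape automata, automatically has the bounded length difference property for the relations $R_a$, and its language $L$, being regular, is quasiregular (in fact we can pass to a prefix-closed sublanguage or use Theorem \ref{quasirefequalquasiprefclos}); moreover $d_A(\psi(w),\pi(w)) \le h(|w|)$ for every $w \in L$. So for each $w \in L$, the group element $\psi(w)$ is within distance $h(|w|)$ of $\pi(w)$, and since $h \ll \mathfrak{i}$, this discrepancy is sublinear.

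First I would describe the construction of $L'$. For each $w \in L$, fix a geodesic word $\gamma(w) \in S^*$ with $\pi(\gamma(w)) = \psi(w)^{-1}\pi(w)$, so $|\gamma(w)| = d_A(\psi(w),\pi(w)) \le h(|w|)$; set $w'' = w\,\gamma(w)$, so that $\pi(w'') = \pi(w)$. The assignment $w \mapsto \pi(w)$ is a bijection $L \to G$ (since $\pi_L$ is the canonical projection and $L$ is a normal form's language), hence $L'' = \{w'' \mid w \in L\}$ is the language of a one-to-one normal form of $G$. Next I would verify that $L''$ is quasigeodesic: since $\psi$ is a Cayley automatic representation of the (possibly non-automatic) group $G$, the word $w$ need not be quasigeodesic a priori, so here I should instead appeal to the bounded length difference property of $R_a$ together with the fact that $\psi : L \to G$ is a bijection with regular $L$, which forces $|w| \preceq d_A(\psi(w))$ — this is the standard observation that a Cayley automatic representation has at most exponentially many words of each length matched against the ball growth, refined via the synchronous fellow-traveler-type bound on the relations to give a linear comparison $|w| \le C(d_A(\psi(w)) + 1)$. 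Then $|w''| = |w| + |\gamma(w)| \le C(d_A(\psi(w))+1) + h(|w|)$, and since $d_A(\psi(w)) \le d_A(\pi(w)) + h(|w|)$ and $h \ll \mathfrak{i}$ (so $h(|w|) \ll |w| \preceq d_A(\psi(w))$), we absorb the $h$-terms and obtain $|w''| \le C'(d_A(\pi(w'')) + 1)$, i.e. $L''$ is quasigeodesic.

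Then I would check the $h(n)$--fellow traveler property. Take $w_1, w_2 \in L$ with $\pi(w_1'') a = \pi(w_2'')$ for some $a \in A$, i.e. $\pi(w_1) a = \pi(w_2)$. Writing $u_1, u_2$ for the $\psi$-forms of the relevant elements, the relation $R_a$ being synchronously automatic gives a constant $k$ so that the prefixes of the $\psi$-preimages $k$--fellow travel as elements $\psi(\cdot(t))$; composing with the bound $d_A(\psi(v(t)), \pi(v(t))) \le h(t)$ for every prefix $v(t)$ of $v \in L$ (this follows from applying $h$ to prefixes, as $L$ is prefix-closed after the reduction, or from the boundedness of the relations), we get that along the common portion $w(t)$ the two travelers stay within $k + 2h(t)$; along the appended geodesic tails $\gamma(w_i)$, whose lengths are $\le h(|w_i|)$, the distance is at most that plus $2h(\max|w_i|)$, and by quasigeodesicity $\max|w_i| \preceq n$, so $s(n) \preceq h(n) + \text{const} \preceq h$. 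Finally, if $L$ is not already prefix-closed I would invoke Theorem \ref{quasirefequalquasiprefclos} to replace it with a quasiprefix-closed (hence quasiregular) one without disturbing the fellow traveler bound, and then apply the same geodesic-completion construction; alternatively one passes directly to a prefix-closed regular sublanguage of $L$ of representatives. The main obstacle I anticipate is the quasigeodesicity step: establishing $|w| \le C(d_A(\psi(w))+1)$ for the $\psi$-representation of a group that may fail to be automatic requires care — it uses that the $R_a$ are recognized by \emph{synchronous} two-tape automata, which bounds how fast $\psi$-lengths can grow relative to word length, and this synchronous bounded-length-difference structure is exactly what does \emph{not} survive for general Cayley automatic representations of non-automatic groups, so the argument must use the hypothesis $h \ll \mathfrak{i}$ essentially to close the estimate.
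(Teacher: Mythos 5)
Your overall route---append to each $w \in L$ a geodesic connecting $\pi(w)$ and $\psi(w)$, use the bounded length difference of the Cayley automatic representation for quasigeodesicity, and combine the synchronous structure of the $R_a$ with the bound $d_A(\psi(\cdot),\pi(\cdot)) \leqslant h$ for the fellow traveler estimate---is exactly the paper's. But your construction is broken at its first step. You define $\gamma(w)$ by $\pi(\gamma(w)) = \psi(w)^{-1}\pi(w)$ and claim $\pi(w\,\gamma(w)) = \pi(w)$; in fact $\pi(w\,\gamma(w)) = \pi(w)\,\psi(w)^{-1}\pi(w)$, which is neither $\pi(w)$ nor $\psi(w)$. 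More seriously, you justify that $L''$ is a normal form by asserting that $w \mapsto \pi(w)$ is a bijection $L \to G$ ``since $\pi_L$ is the canonical projection and $L$ is a normal form's language.'' This is false: in a Cayley automatic representation the bijection with $G$ is the arbitrary map $\psi$, and $\pi$ restricted to $L$ is in general neither injective nor surjective (if $\pi|_L$ were such a bijection, $L$ together with the synchronous relations $R_a$ would make $G$ automatic, contradicting the hypothesis). The repair is to orient the tail the other way, as the paper does: take $\pi(\gamma(w)) = \pi(w)^{-1}\psi(w)$, so that $\pi(w\,\gamma(w)) = \psi(w)$; then $w\,\gamma(w) \mapsto \pi(w\,\gamma(w))$ is a bijection onto $G$ because $\psi$ is, and $d_A(\pi(w\,\gamma(w))) = d_A(\psi(w))$, so the estimate $|w| \leqslant C_1(d_A(\psi(w))+1)$ feeds directly into $|w\,\gamma(w)| \leqslant C(d_A(\pi(w\,\gamma(w)))+1)$ once $|\gamma(w)| \leqslant h(|w|) \leqslant C_2 |w|$ is absorbed using $h \ll \mathfrak{i}$.

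Two smaller points. Your closing worry that the linear bound $|w| \leqslant C_1(d_A(\psi(w))+1)$ might fail for non--automatic $G$ is unfounded: the synchronous automata for the $R_a$ give a uniform bound on $||u_1|-|u_2||$ whenever $\psi(u_1)a = \psi(u_2)$, and induction along a geodesic from the $\psi$--representative of $e$ yields the bound for \emph{every} Cayley automatic representation; the hypothesis $h \ll \mathfrak{i}$ plays no role there, only in controlling the appended tail. In the fellow traveler step, your phrase that prefixes ``$k$--fellow travel as elements $\psi(\cdot(t))$'' is not meaningful, since a prefix $w_i(t)$ need not lie in $L$ and $\psi$ is not defined on it; the paper instead invokes the estimate $d_A(\pi(w_1(n)),\pi(w_2(n))) \leqslant 2h(n) + C_0$ from \cite[Theorem~2.1]{eastwest19} on the common portion $n \leqslant \min\{|u_1|,|u_2|\}$, and handles the remainder by the triangle inequality using $|v_i| \leqslant h(|u_i|)$ and $||u_1|-|u_2|| \leqslant C_2$. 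With these repairs your argument coincides with the paper's.
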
	  
 \begin{proof} 
 	First we describe how to construct 
 	a normal form $L'$ from a given Cayley automatic
 	representation $\psi : L \rightarrow G$. 
 	For a given $u \in L$ let $v \in S^* $ be a 
 	word corresponding to a shortest path 
 	between $\pi(u)$ and $\psi(u)$
 	in the Cayley graph 
 	$\Gamma(G,A)$
    for which $\psi(u) = \pi(uv)$. 
    We define the language 
    $L' \subseteq S^*$ as 
    $L' = \{uv \, | \, u \in L \}$.   
    Below we prove that 
    $L'$ defines a quasigeodesic 
    normal form that satisfies the $h(n)$--fellow 
    traveler property. 
     
    We now show that the normal 
    form defined by $L'$ 
    is quasigeodesic. 
    Let $g = \psi(u)$ for some $u \in L$ and $w = uv \in L'$ 
    be the normal form of $g$. 
    We need to show that $|uv| \leqslant C d_A (g) + C$ 
    for some constant $C>0$. 
    Since $\psi : L \rightarrow G$ is a Cayley 
    automatic representation, there exists a 
    constant $C_1>0$ for which 
    $|u| \leqslant C_1 (d_A (g) + 1)$. 
    Let $n = |u|$. Then $|v| \leqslant h(n)$. 
    Therefore, $|w|= |u| + |v| \leqslant n + h(n)$. 
    Since $h \ll \mathfrak{i}$, then 
    $h(n) \leqslant C_2 n$ for some constant $C_2 >0$
    and all $n \geqslant N_0$. Let 
    $C_3  = C_2+1$. 
    Then we have that $|w| \leqslant 
    C_3 n \leqslant C_3 C_1(d_A(g)+1)$ for all 
    $n \geqslant N_0$. Therefore, there exists 
    a constant $C>0$ such that 
    $|w| \leqslant C (d_A (g)+1)$ for all 
    $n \geqslant 0$.
    
    \begin{figure}
    \centering 	
    \includegraphics[width=6.5cm]{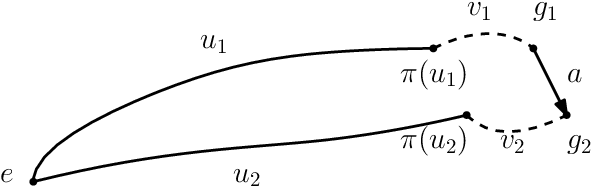}
    \caption{The curves labeled 
    	    $u_1$ and $u_2$ 
    	    correspond to the words 
    	    $u_1,u_2 \in L$ 
            for which $\psi(u_1)= g_1$ and 
            $\psi(u_2) = g_2$. The dashed 
            curves $v_1$ and $v_2$ show shortest paths between 
            $\pi(u_1)$ and $g_1$ and 
            $\pi(u_2)$ and $g_2$, 
            respectively. The directed 
            edge labeled by $a$ leads from 
            $g_1$ to $g_2$.}
    	\label{cayleydist_fig}
    \end{figure}

    Let us show that $L'$ defines a normal 
    form in $G$ that satisfies the $h(n)$--fellow 
    traveler property. 
    Let $w_1 = u_1 v_1 \in L'$ and 
        $w_2 = u_2 v_2 \in L'$ be 
    the normal forms of the 
    group elements 
    $g_1 = \pi (w_1)$ and $g_2 = \pi (w_2)$ 
    for which $g_1 a = g_2$ for some $a \in A$. 
    See Figure \ref{cayleydist_fig} for 
    illustration. 
    We denote by $m$ 
    the minimum $m = \min \{|u_1|, |u_2|\}$.
    Let $s(n)$ be the function defined 
    by the equation \eqref{def_function_f_1}
    for the normal form $L'$ and 
    the set of generators $A$. 
    It can be shown that 
    $s(n) \leqslant 2 h(n) + C_0$ 
    for all $n \leqslant m$ and some constant 
    $C_0>0$; 
    this is proved in details
    for all $n \geqslant 0$ 
    in \cite[Theorem~2.1]{eastwest19}. 
    If $n \geqslant m$, then by the 
    triangle inequality  
    $d_A (\pi (w_1(n)),\pi(w_2(n))) 
     \leqslant |v_1| + |v_2|  + ||u_1|-|u_2|| + 1$. 
    Since the representation 
    $\psi : L \rightarrow G$ is Cayley automatic, 
    $||u_1|-|u_2|| \leqslant C_2$ for some constant 
    $C_2$. Let $C_3 = C_2 + 1$. 
    Therefore, for all $n \geqslant m$: 
    \begin{equation*}
    \begin{split}
    d_A (\pi (w_1(n)),\pi(w_2(n))) 
    \leqslant |v_1| + |v_2|  + C_3 \leqslant 
    h(|u_1|) + h(|u_2|) + C_3 \leqslant \\ 
    h(m) + h (m+C_2) + C_3 \leqslant 2 h(m + C_2) + C_3 \leqslant 2 h (n+ C_2) + C_3.
    \end{split}
    \end{equation*} 
    Let $C_4 = \max \{C_0,C_3\}$. 
    Then, for all $n \geqslant 0$, we have that 
    $d_A (\pi (w_1(n)),\pi(w_2(n))) 
     \leqslant 2 h (n + C_2) + C_4$. 
    Let $f(n)= 2 h (n + C_2) +  C_4$.
    Since $f (n) \preceq h(n)$, 
    we get that 
    $s(n) \preceq h(n)$. 
    Therefore, $L'$ defines a normal form 
    of $G$
    satisfying the $h(n)$--fellow traveler 
    property. 
 \end{proof}

 \section{Quasiregular Normal Forms} 
 \label{quasiregular_sec}
 
 This section discusses quasiregular normal 
 forms in the context of the $f(n)$--fellow 
 traveler property. 
 By Theorems 
 \ref{ssp_no_quasidesic_thm} and  
 \ref{nonfp_no_quasigeodesic_thm} 
 for groups with the 
 strongly--super--polynomial 
 Dehn function and
 non--finitely presented groups
 there exist no quasigeodesic normal forms satisfying the
 $f(n)$--fellow traveler property. 
 In this 
 section we show  
 examples of quasiregular 
 normal forms 
 satisfying the $f(n)$--fellow traveler 
 property for such groups. 
 
 \subsection{Baumslag--Solitar groups} 
 
 We consider a family of 
 the Baumslag--Solitar groups 
 $BS(p,q) = \langle a, t \, 
 | \, t a^p t^{-1} = a^q \rangle$ for 
 $1 \leqslant p < q$. Each group of this family 
 has the exponential Dehn function, so by 
 Theorem \ref{ssp_no_quasidesic_thm} it does not 
 admit a quasigeodesic normal form satisfying 
 the
 $f(n)$--fellow traveler property. In Theorem \ref{BS_qusireg_normal_form} we will show that 
 each group of this family admits a quasiregular
 normal form satisfying 
 the $\log(n)$--fellow 
 traveler property.

 \begin{theorem}
 \label{BS_qusireg_normal_form}   
    Each group $BS(p,q)$ for 
    $1 \leqslant p < q$ 
    admits a quasiregular normal form satisfying the 
    $\log(n)$--fellow traveler property.  
 \end{theorem}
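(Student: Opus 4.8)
The plan is to build an explicit normal form for $BS(p,q)$ using the standard description of elements via their action on the Bass--Serre tree / the ``britton-reduced'' HNN-form, and then to insert padding loops along the $a$-axis in a carefully metered way so that the fellow traveler distance grows only logarithmically. Concretely, every element of $BS(p,q)$ can be written as a word of the form $t^{-i} a^{k} t^{j} \cdot (\text{stuff})$, but the cleanest route is to use the normal form coming from the horocyclic structure: an element is determined by a finite path in the tree (a word in $t,t^{-1}$ read as a $\pm$-sequence, suitably reduced using the relation $t a^{p} t^{-1}=a^{q}$) together with an integer ``height coordinate''. I would fix a prefix-closed set of tree-addresses (e.g. geodesic words in $t^{\pm1}$ subject to the rule that one never writes $a^{m}t^{-1}$ with $0\le m<p$, which forces a canonical choice among the $p$ representatives at each downward step), and along each such address record the residual $a$-power in base, writing it as $a^{\pm c}$ at the appropriate levels. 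The point is that the $a$-power visible at depth $d$ can be taken to be bounded by a constant (it is a digit in base $p$ or $q$), so that an element at distance $n$ in $BS(p,q)$ gets a normal form whose structure mirrors the known automatic-like structure but is \emph{not} length-comparable to $d_A$ — it is exponentially longer, which is exactly why Theorem~\ref{ssp_no_quasidesic_thm} does not apply and we are free to spend length.

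Next I would check prefix-closedness, hence quasiregularity (a prefix-closed normal form is quasiprefix--closed with $C=0$, hence quasiregular by the remarks before Theorem~\ref{quasirefequalquasiprefclos}). This is where the careful choice of canonical addresses matters: I must make sure that every prefix of a chosen word is again the chosen word of the group element it represents, which amounts to checking that the ``never write $a^{m}t^{-1}$ with $m<p$'' convention, together with the convention for reading the high-order $a$-digit first, is itself prefix-stable. I expect this to be a finite case analysis on what symbol can legally follow a given symbol.

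The heart of the argument — and the step I expect to be the main obstacle — is the $\log(n)$--fellow traveler estimate. Given $g$ and $ga$ (or $gt$) with normal forms $w_1,w_2$, one must track the synchronous divergence $d_A(\pi(w_1(s)),\pi(w_2(s)))$. Multiplication by $a$ can trigger a cascade of carries up the $a$-axis: changing one low-order digit forces changes in $O(\log d_A(g))$ higher digits, because each level divides the count by $p$ or $q\ge 2$. The normal forms $w_1$ and $w_2$ therefore agree outside a ``window'' of size $O(\log n)$ of affected levels, and inside that window the two prefixes differ by a word whose $A$-length I must bound by $O(\log n)$ as well — here is where padding must be inserted: between consecutive levels I would attach a loop in $\Gamma(BS(p,q),S)$ of length $\Theta(d)$ at the $d$-th level (such loops exist, e.g. $t a^{p} t^{-1} a^{-q}$ conjugated appropriately, or simply $a$-loops built from the relator), so that by the time the path has descended to level $d$ it has traversed $\Theta(d^{2})$ total steps, i.e. $s=\Theta(d^{2})$ when the window sits at level $d$, giving window-width $d=\Theta(\sqrt{s})$ — too big. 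So instead the metering must be geometric: make the padding at level $d$ have length $\Theta(2^{d})$ (or $\Theta(q^{d})$), so that reaching level $d$ costs $s=\Theta(2^{d})$ steps, hence $d=\Theta(\log s)$, and the $O(d)=O(\log s)$ affected levels lie within an $O(\log s)$-length window; since each affected level contributes a bounded-length correction in $A$, the total divergence is $O(\log s)$, i.e. $s(n)\preceq\log n$. One then re-verifies that inserting these geometric loops preserves prefix-closedness (loops are closed, so they do not alter which group element a prefix represents up to a bounded correction — more precisely one inserts a balanced loop $u_d u_d^{-1}$ so that every prefix is within bounded $A$-distance of an honest address, and then folds that bounded slack into the quasiregularity constant $c$), and that the normal form remains one-to-one and surjective. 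Assembling these three ingredients — explicit prefix-closed addresses, the carry-cascade bound, and the geometric padding — yields a quasiregular (indeed prefix-closed up to the bounded correction, so quasiprefix--closed, so quasiregular) normal form with the $\log(n)$--fellow traveler property.
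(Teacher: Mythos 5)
Your construction takes a genuinely different route from the paper's, and the divergence is where the gap lies. The paper uses no digit/tree-address coding and no inserted padding: it takes the plain freely reduced form $w_\ell t^{\varepsilon_\ell}\cdots w_1 t^{\varepsilon_1}a^k$ with the terminal $a$-power written in unary. That language is prefix-closed essentially for free, and the $\log(n)$ bound drops out of the Burillo--Elder metric estimate $d_A(g)\asymp \ell+\log(|k|+1)$: at synchronous time $n$ the two prefixes differ by a product of a bounded number of elements of the form $a^j$ or $a^rt^{\pm 1}a^i$ with $|i|,|j|\leqslant n$, each of $d_A$-size $O(\log n)$. The exponential ``stretching'' that makes a sublinear fellow-traveler bound possible is supplied by the unary block $a^k$ itself, which arises naturally inside a prefix-closed language, rather than by artificially inserted loops.

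The concrete gap in your version is quasiregularity. If at level $d$ you insert a loop $u_du_d^{-1}$ of length $\Theta(2^d)$, then a prefix of a normal form that stops in the middle of that loop must be extended by up to $\Theta(2^d)$ further letters before it again lands in $L$; Definition \ref{quasireg_def} demands a uniformly bounded extension in \emph{word length}, not small $A$-distance in the group, and these are very different things here. This is exactly the obstruction the paper records for the padding constructions of subsection \ref{two_ways_fellow_prop} (repeating a fixed bounded loop is even provably incompatible with a one-to-one quasiregular normal form), so ``folding the slack into the constant $c$'' does not work: the slack is unbounded. Separately, the $\log(n)$ estimate is not actually established: the claim that each carry-affected level contributes an $O(1)$ correction in $d_A$ depends delicately on the digit order (for most-significant-digit-first, a single multiplication by $a$ can change every digit), on both travelers sitting at the same height of the Bass--Serre tree at each synchronous time, and on the untreated case of multiplication by $t^{\pm 1}$. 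As written, the proposal proves neither quasiregularity nor the fellow-traveler bound; repairing it essentially leads back to the paper's unary construction.
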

 \begin{proof}   
    Every group element  $ g \in BS(p,q)$ 
    for $1 \leqslant p < q$
    can  be uniquely written as a  
    freely reduced word over the alphabet  
    $\{a^{\pm 1}, t^{\pm 1}\}$
    of the form: 
    \begin{equation}
    	\label{normal_form_BS}   
    	w = w_\ell t^{\varepsilon_\ell} \dots
    	w_1 t^{\varepsilon_1} a^k,    
    \end{equation}
    where $\varepsilon_i \in \{+1,-1\}$,   
    $w_i \in \{\epsilon, a, \dots, a^{p-1}\}$  
    if $\varepsilon_i  = -1$,
    $w_i \in \{\epsilon, a, \dots, a^{q-1}\}$ 
    if $\varepsilon_i = +1$ and $k \in \mathbb{Z}$. 
    So the identity \eqref{normal_form_BS} 
    defines a normal form in $BS(p,q)$.  
    This normal form 
    is prefix--closed, so it is quasiregular. 
    Below we show that it satisfies the 
    $\log(n)$--fellow traveler property. 
    
    Let $A = \{a,t\}$ and $d_A$ be the word metric 
    in $BS(p,q)$ with respect to the generators 
    $a$ and $t$. 
    Burillo and Elder \cite{BurilloElder14}
    showed that there exist 
    constants $C_1,C_2,D_1,D_2 > 0$ such that 
    for all $g \in BS(p,q)$: 
    \begin{equation} 
    \label{Elder_Burillo_estimate}	
       C_1 (\ell +\log (|k|+1)) - D_1 
       \leqslant d_{A} (g)   \leqslant
       C_2 (\ell +\log (|k|+1)) + D_2.
    \end{equation}	  
    We first consider a pair of group 
    elements $g$ and $ga$.  
    Let $w_a$ be the normal form of $ga$. 
    Then $w_a = w_\ell t^{\varepsilon_\ell} \dots
    w_1 t^{\varepsilon_1} a^k$. Clearly, 
    $d_A (\pi(w(n)), \pi(w_a(n))) \leqslant 1$ for all $n \geqslant 0$. 
    
    Now we consider a pair of group elements  
    $g$ and $gt$. We denote by $w_t$ 
    the normal form of $gt$. 
    Let $k = mq + r$, where $m \in \mathbb{Z}$ and
    $r \in \{0, \dots, q-1 \}$.  
    We have three different cases: 
    \begin{itemize}
    \item{Suppose $r \neq 0$. Then 
          $w_t = w_\ell t^{\varepsilon_\ell} \dots
          w_1 t^{\varepsilon_1} a^r t a^{mp}$.
          Let $u = w_\ell t^{\varepsilon_\ell} \dots
          w_1 t^{\varepsilon_1}$. 
          If $n \leqslant |u|$, then  
          $d_A (\pi (w(n)), \pi (w_t(n)))=0$. 
          If $|u| < n \leqslant |u|+r+1$, 
          then $d_A (\pi(w(n)), \pi (w_t(n)))
           \leqslant 2 r +2$. 
          If $n > |u|+r+1$, then 
          $d_A (\pi (w(n)), \pi(w_t(n))) 
          \leqslant d_A (a^rta^{i}) + 
                    d_A (a^j)$, where 
           $i = \min \{|mp|,n-(|u|+r+1)\} * 
           \mathrm{sign}(k)$ and 
           $j = \min \{|k|, n - |u|\} 
           * \mathrm{sign}(k)$.
           By \eqref{Elder_Burillo_estimate}, 
           $d_A(a^rta^{i}) \leqslant 
            C_2 (1 + \log (|i|+1)) + D_2$ and 
           $d_A (a^j) \leqslant C_2 
           \log (|j|+1) + D_2$. 
           Therefore,    
           $d_A (\pi (w(n)),\pi (w_t(n)))
            \leqslant 2C_2 \log(n+1) + C_2 + 2D_2$.}
     \item{Suppose $r=0$ and either 
           $\varepsilon_1 = 1$ or $\ell = 0$.  
           Then 
           $w_t = w_\ell t^{\varepsilon_\ell} \dots
           w_1 t^{\varepsilon_1}  t a^{mp}$.
           Let $u = w_\ell t^{\varepsilon_\ell} \dots
           w_1 t^{\varepsilon_1}$.
           If $n \leqslant |u|$, then 
           $d_A (\pi(w(n)), \pi (w_t(n)))=0$. 
           If $n > |u|$, then 
           $d_A (\pi(w(n)), \pi (w_t(n))) 
            \leqslant 
            d_A (ta^{i}) + 
            d_A (a^j)$, where 
            $i = \min \{|mp|,n-|u| - 1\} * 
            \mathrm{sign}(k)$ and 
            $j = \min \{|k|, n - |u|\} 
            * \mathrm{sign}(k)$.
            By \eqref{Elder_Burillo_estimate}, 
            $d_A(ta^{i}) \leqslant 
            C_2 (1 + \log (|i|+1)) + D_2$ and 
            $d_A (a^j) \leqslant C_2 
            \log (|j|+1) + D_2$.
            Therefore,     
            $d_A (\pi (w(n)),\pi (w_t(n)))
            \leqslant 2C_2 \log(n+1) + C_2 + 2D_2$.}
      \item{Suppose $r=0$, $\varepsilon_1 = -1$ 
      	    and $\ell \geqslant 1$. 
      	    Then $w_t =  w_\ell t^{\varepsilon_\ell} \dots
      	    w_2 t^{\varepsilon_2} w_1  a^{mp}$. 
      	    Let $v = w_\ell t^{\varepsilon_\ell} \dots
      	    w_2 t^{\varepsilon_2} w_1$.
      	    If $n \leqslant |v|$, then 
      	    $d_A (\pi (w(n)), \pi (w_t(n)))=0$.
      	    If $n > |v|$, then 
      	    $d_A (\pi (w(n)), \pi (w_t(n)))
      	    \leqslant d_A (a^i) + d_A (t^{-1}a^j)$, 
      	    where 
      	    $i = \min \{|mp|,n-|v|\} * 
      	    \mathrm{sign}(k)$ and 
      	    $j = \min \{|k|, n - |v| -1\} 
      	    * \mathrm{sign}(k)$. 
      	    By \eqref{Elder_Burillo_estimate}, 
      	    $d_A(a^{i}) \leqslant 
      	    C_2 \log (|i|+1) + D_2$ and 
      	    $d_A (t^{-1}a^j) \leqslant C_2 
      	    (1 + \log (|j|+1)) + D_2$.
      	    Therefore,     
      	    $d_A (\pi (w(n)),\pi (w_t(n)))
      	    \leqslant 2C_2 \log(n+1) + C_2 + 2D_2$.}                     
    \end{itemize}	      
    From these three cases we can see that 
    $s(n) \preceq \log (n)$. 
    Therefore, the normal form 
    given by the identity \eqref{normal_form_BS}
    satisfies the $\log (n)$--fellow traveler 
    property.  
 \end{proof}	
 \begin{remark}
    We note that for the normal form 
    in the proof of Theorem \ref{BS_qusireg_normal_form} 
    the upper bound $s(n) \preceq \log (n)$ is 
    sharp. Indeed, let $w = a^{mq^2}$ for $m>0$. Then 
    $w_t = ta^{mpq}$. 
    For $n = mq$
    we have that 
    $d_A (\pi(w(n)),\pi(w_t(n))) =
    d_A (a^{mq},ta^{mq-1}) = 
    d_A (a^{-mq}ta^{mq-1}) = 
    d_A (ta^{m(q-p)-1})$. 
    By \eqref{Elder_Burillo_estimate},
    $d_A (ta^{m(q-p)-1}) \geqslant 
    C_1 (1 + \log |m(q-p)|) - D_1 = 
    C_1 \left(1 - \log \left( \frac{q}{q-p} \right) + \log (mq) 
    \right) - D_1$.
    Therefore, 
    for $n =mq$ we have that:
    $$d_A (\pi(w(n)),\pi(w_t(n))) 
     \geqslant C_1 \log (n) - \left(D_1 + C_1 \log \left(\frac{q}{q-p}\right) -C_1\right).$$
     Therefore,  
     by the triangle inequality we get that for all $n$:
     \begin{equation*}
     d_A (\pi(w(n)),\pi(w_t(n))) 
     \geqslant C_1 \log (n) - \left(D_1 + C_1 \log \left(\frac{q}{q-p}\right) -C_1 + 2q\right).
     \end{equation*}
      This implies that 
      $\log(n) \preceq s(n)$. 
 \end{remark} 

 \begin{remark} 
 	Let us be given a normal form 
    of $BS(p,q)$ for $1 \leqslant p < q$
 	satisfying the $f(n)$--fellow traveler property. 
 	We denote by $m_{a}, m_{a^{-1}}, 
 	m_{t}$ and $m_{t^{-1}}$ the functions 
 	which send the normal 
 	form of a group element 
 	$g \in BS(p,q)$ to 
 	the normal form of a group element 
 	$ga,ga^{-1},gt$ and $gt^{-1}$, 
 	respectively.  
 	One can notice that
 	the functions $m_a, m_{a^{-1}}, m_t$ and 
 	$m_{t^{-1}}$ cannot be all computed 
    in 	$o(n \log n)$ time
    on a one--tape Turing machine.      
    Indeed, suppose each of the 
    functions $m_a, m_{a^{-1}}, 
    m_t$ and $m_{t^{-1}}$ is computed 
    on a one--tape Turing machine
    in $o(n \log n)$ time.    
    Hartmanis \cite{Hartmanis68} 
    and, independently, 
    Trachtenbrot \cite{Trachtenbrot64} 
    showed that a 
    language recognized on a 
    one--tape Turing machine in  
    $o(n \log n)$ time must be regular. 
    This fact and the pumping lemma
    imply that if each of 
    the functions $m_a,m_{a^{-1}}, 
    m_t$ and $m_{t^{-1}}$ is computed
    on a one--tape Turing machine in 
    $o(n \log n)$ time, then 
    the normal form satisfies 
    the bounded length difference 
    property, so it is quasigeodesic;  
    for the proof see \cite[Theorem~4]{KB23}. 
    Thus, by Theorem \ref{ssp_no_quasidesic_thm}, 
    we arrive at a contradiction. 
    However, it can be verified that
    for the normal form given by 
    the identity \eqref{normal_form_BS}   
    the functions $m_a,m_{a^{-1}},m_t$
    and $m_{t^{-1}}$ can be computed
    in $O(n)$ time using
    a more powerful computational model  
    -- a two--tape Turing machine.
    Though this verification is not 
    difficult we omit it as it is 
    out of scope of this paper.
 \end{remark}	
 
 \subsection{Wreath product  
 	            $\mathbb{Z}_2 \wr \mathbb{Z}^2$} 
 
 We consider the wreath product 
 $\mathbb{Z}_2 \wr \mathbb{Z}^2 = 
 \langle a, b , c \, | \, 
 [a^{i_1} b^{j_1} c a^{-i_1} b^{-j_1},   a^{i_2} b^{j_2} c a^{-i_2} b^{-j_2}] =e, ab=ba, c^2 =e \rangle$. This group is  
 non--finitely presented, so 
 by Theorem \ref{nonfp_no_quasigeodesic_thm} 
 it does not admit a quasigeodesic 
 normal form satisfying the 
 $f(n)$--fellow traveler property. 
 In Theorem \ref{Z2wrZ2_quasireg_norm_form}
 we will show that 
 $\mathbb{Z}_2 \wr \mathbb{Z}^2$ 
 admits a quasiregular normal 
 form satisfying the $\sqrt{n}$--fellow 
 traveler property.  
 
 Every group element of 
 $\mathbb{Z}_2 \wr \mathbb{Z}^2$
 can be written as a pair 
 $(\varphi, z)$, where 
 $\varphi : \mathbb{Z}^2 \rightarrow \mathbb{Z}_2$ is a function 
 such that $\varphi(\xi)$ is not equal to  
 the identity for at most finitely many 
 $\xi \in \mathbb{Z}^2$ and $z \in \mathbb{Z}^2$. 
 For the group 
 $\mathbb{Z}^2 = \{(x,y)\, | \, 
 x,y \in \mathbb{Z} \}$ we denote by 
 $a$ and $b$ the generators 
 $a = (1,0)$ and $b = (0,1)$. 
 The group $\mathbb{Z}^2$ is canonically
 embedded in 
 $\mathbb{Z}_2 \wr \mathbb{Z}^2$
 by mapping $\xi \in \mathbb{Z}^2$ 
 to $(\varphi_e,\xi)$, where 
 $\varphi_e : \mathbb{Z}^2 \rightarrow   
  \mathbb{Z}_2$ 
 sends every element 
 of $\mathbb{Z}^2$ to the identity 
 $e \in \mathbb{Z}_2$. 
 We denote by $c$ the nontrivial 
 element of $\mathbb{Z}_2$. 
 The group $\mathbb{Z}_2$ is canonically 
 embedded in 
 $\mathbb{Z}_2 \wr \mathbb{Z}^2$ 
 by mapping $c$ to 
 $(\varphi_c,(0,0))$, where  
 $\varphi_c : \mathbb{Z}^2 
  \rightarrow \mathbb{Z}_2$ is a function
 such that $\varphi_c(\xi)=e$ if 
 $\xi \neq (0,0)$ and 
 $\varphi_c ((0,0))=c$.
 We will identify $a,b$ and $c$ with the 
 group elements 
 $(\varphi_e,a)$, $(\varphi_e,b)$ and
 $(\varphi_c, (0,0))$  in 
 $\mathbb{Z}_2 \wr \mathbb{Z}^2$, 
 respectively.
 The set $A = \{a,b,c\}$
 generates the group 
 $\mathbb{Z}_2 \wr \mathbb{Z}^2$.

 \begin{theorem}
 \label{Z2wrZ2_quasireg_norm_form}	 
 	The wreath product
 	$\mathbb{Z}_2 \wr \mathbb{Z}^2$ 
 	admits a quasiregular normal form 
 	satisfying the $\sqrt{n}$--fellow traveler 
 	property.  	
 \end{theorem}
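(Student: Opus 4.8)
The plan is to build an explicit normal form out of a space--filling square spiral on $\mathbb{Z}^2$, and then verify quasiregularity (in fact prefix--closedness) and the $\sqrt{n}$--bound by a direct case analysis.

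First I would fix once and for all an enumeration $p_0=(0,0),p_1,p_2,\dots$ of $\mathbb{Z}^2$ by the square spiral, recording two elementary facts about it: (i) after $t$ steps the path has visited only cells within $\ell_\infty$--distance $O(\sqrt t)$ of the origin; (ii) if $\xi,\xi'$ are adjacent cells, then their spiral indices differ by $O\bigl(\sqrt{\max\{\mathrm{ind}(\xi),\mathrm{ind}(\xi')\}}\,\bigr)$. Writing a group element as $g=(\varphi,z)$ with $\varphi$ the finitely supported lamp configuration and $z$ the cursor, let $K$ be the spiral index of the last lit lamp (or $-\infty$ if $\varphi$ is trivial), let $J=\mathrm{ind}(z)$, and put $M=\max\{K,J\}$. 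I define $w_g$ to be the word that traverses the spiral from $p_0$ to $p_M$, emitting the letter $c$ each time it first arrives at a lit lamp, and then, if $M>J$, retraces the spiral backwards from $p_M$ to $p_J=z$ without emitting any $c$. Since the forward traversal passes every lit lamp exactly once, $\pi(w_g)=(\varphi,z)$; as $g\mapsto w_g$ is visibly injective, $L=\{w_g\mid g\in G\}$ is a one--to--one normal form.

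The next step is to check that $L$ is prefix--closed (hence quasiregular, with constant $0$). A prefix of $w_g$ that lies in the forward part traverses the spiral to some $p_i$ and toggles according to $\varphi$ restricted to $\{p_0,\dots,p_i\}$; this is precisely the word $w_h$ for $h=(\varphi|_{\{p_0,\dots,p_i\}},\,p_i)$, since for that element the last lit lamp has index $\le i$ and the cursor has index $i$, so its value of $M$ equals $i$ and there is no backward part. A prefix lying in the backward part traverses the full spiral to $p_K$ and then retraces to some $p_i$ with $J\le i\le K$; this is $w_h$ for $h=(\varphi,p_i)$, whose $M$ is still $K$ because the lamps are unchanged and $i\le K$. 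So every prefix of a word of $L$ belongs to $L$.

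The heart of the proof is the $\sqrt{n}$--fellow traveler property, where facts (i) and (ii) do the work. Fix $g=(\varphi,z)$ and a generator $\gamma\in\{a,b,c\}$. If $\gamma=c$ and $z$ is visited at or after the last lit lamp, then $w_{g\gamma}$ differs from $w_g$ by a single trailing $c$, so the two paths $1$--fellow travel. If $\gamma=c$ and $z$ is visited strictly before the last lit lamp, then $w_g$ and $w_{g\gamma}$ differ only by one $c$ inserted at the instant the spiral reaches $z$; this produces a time--shift of one step together with a lamp discrepancy at $z$, and at every time $t$ the two cursors are one step apart and lie within $O(\sqrt t)$ of $z$ --- while both are on the forward spiral this holds because $\mathrm{ind}(z)\le t$ and (i), and once both are on the backward retrace it holds because then $t\gtrsim K\gtrsim|p_K|_\infty^2$ and the retrace never leaves the ball of radius $|p_K|_\infty$ --- so the distance between the two group elements (move the cursor to $z$, toggle, move back) is $O(\sqrt t)$. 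If $\gamma\in\{a,b\}$ then $\varphi$ is unchanged, $w_g$ and $w_{g\gamma}$ agree on a common prefix, and thereafter one performs the short backward retrace or forward continuation ending at $z$ while the other does the analogous tail ending at the cursor of $g\gamma$; since these two cursors are adjacent, (ii) bounds by $O(\sqrt{\mathrm{ind}})$ the number of spiral steps by which the two tails differ, and both tails stay inside a ball of radius $O(\sqrt t)$, so the two group elements again remain $O(\sqrt t)$ apart. Altogether $s(n)\le C\sqrt n$ for an absolute constant $C$, hence $s\preceq\sqrt n$, and since $\sqrt n\ll\mathfrak{i}$ the normal form satisfies the $\sqrt n$--fellow traveler property.

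The step I expect to be the genuine obstacle is making quasiregularity and the $\sqrt n$--bound compatible. Sweeping the minimal bounding box of $\mathrm{supp}\,\varphi$ keeps fellow travelers close but is not quasiregular --- a prefix that has covered most of a large box cannot be completed to a genuine normal form in boundedly many letters --- whereas a boustrophedon row--by--row sweep is easy to complete from any prefix but reaches distance proportional to $t$ from its starting corner after $t$ steps, so a single lamp toggle can drive fellow travelers a linear distance apart. The square spiral reconciles the two: it stays in a ball of radius $O(\sqrt t)$ after $t$ steps, and, crucially, every prefix of a ``spiral'' normal form is again a complete normal form, so no rounding of the swept region is ever needed. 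What remains is the routine bookkeeping of the cases above and the verification of facts (i) and (ii).
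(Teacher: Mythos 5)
Your proposal is correct and follows essentially the same route as the paper: the graph $\Gamma$ in the paper's proof is exactly your square spiral, its inequality $(2r(p)-1)^2-1\leqslant k(p)\leqslant(2r(p)+1)^2-1$ encodes your facts (i) and (ii), and the normal form (forward sweep to the last lit lamp or the cursor, whichever is farther along the spiral, then retrace) and the case analysis for the generators $a,b$ versus $c$ coincide with the paper's. The only difference is that you spell out the prefix--closedness check, which the paper merely asserts.
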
	
 \begin{proof} 
    Let  $\Gamma$ be the infinite directed graph
    shown in Fig.~\ref{z2wrz2fig1}
    which is isomorphic to $\left(\mathbb{N};\mathrm{S}\right)$, 
    where $\mathrm{S}$ is the successor 
    function $\mathrm{S}(n) = n + 1$. 
    The vertices of $\Gamma$
    are identified with elements of $\mathbb{Z}^2$,    
    each vertex  
    in $V(\Gamma) \setminus \{(0,0)\}$ has 
    exactly one ingoing and one outgoing edges and the vertex $(0,0)$ has one outgoing 
    edge and no ingoing edges.  
    Let  $\tau : \mathbb{N} \rightarrow \mathbb{Z}^2$ 
    be the mapping such that
    $\tau(0) = (0,0)$ and, for $k>0$, 
    $\tau(k) = (x,y)$ 
    is the end vertex of a directed path in 
    $\Gamma$ of length $k$ which starts 
    in the vertex $(0,0)$.
    
    \begin{figure}
    	\centering 	
    	\includegraphics[width=4cm]{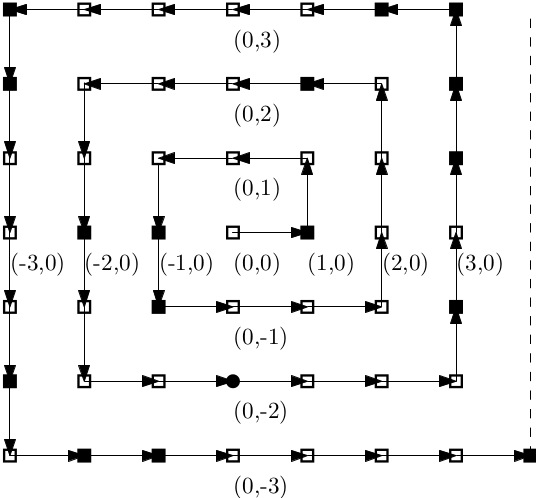}
    	\caption{An infinite digraph $\Gamma$ and 
    		an element $h \in \mathbb{Z}_2 \wr \mathbb{Z}^2$.}
    	\label{z2wrz2fig1}
    \end{figure}
    
    Now we define a normal form in the 
    group $\mathbb{Z}_2 \wr \mathbb{Z}^2$ 
    as follows. 
    Let $g = (\varphi,z) \in \mathbb{Z}_2 \wr 
         \mathbb{Z}^2$, where 
         $\varphi : \mathbb{Z}^2 
          \rightarrow \mathbb{Z}_2$ 
         and $z \in \mathbb{Z}^2$.  
    Let $u_0 = c$ if 
    $\varphi(\tau(0))=c$ and 
    $u_0 = \varepsilon$ if 
    $\varphi(\tau(0))=e$. 
    We denote by $m$ the maximum 
    $m = \max \{i \,|\, \varphi(\tau(i))=c\}$; 
    if $\varphi (\xi) = e$ for all $\xi$ then 
    we put $m=0$. 
    For a given integer 
    $i \in \left[1, m \right]$, 
    let $\alpha_i = a,b,a^{-1}$ 
    and $b^{-1}$ if 
    $\tau(i) - \tau(i-1)$ is equal to 
    $(1,0)$, $(0,1)$, $(-1,0)$ and 
    $(0,-1)$, respectively, 
    and    
    let $\beta_i = c$ if 
    $\varphi(\tau(i))=c$ and 
    $\beta_i = \varepsilon$ if 
    $\varphi(\tau(i))=e$. 
    Let $u_i = 
    \alpha_i \beta_i$ for 
    $i \in \left[1, m \right]$ and 
    $u$ be the concatenation 
    $u=u_0 u_1 \dots u_m$. 
    
    Let $l$ be the integer for which 
    $\tau(l) = z$.  
    If $l > m$, for a given 
    $i \in \left[1, l - m \right]$ 
    let $v_i = a, b, a^{-1}$ and 
    $b^{-1}$ if 
    $\tau (m + i) - \tau (m + i -1)$ is 
    equal to $(1,0)$, $(0,1)$, $(-1,0)$ and 
    $(0,-1)$, respectively.    
    If $l < m$, for a given 
    $i \in \left[1, m- l \right]$ 
    let $v_i = a, b, a^{-1}$ and 
    $b^{-1}$ if 
    $\tau (m - i) - \tau (m -i +1)$ is equal 
    to  $(1,0)$, $(0,1)$, $(-1,0)$ and 
    $(0,-1)$, respectively.
    If $l = m$, let 
    $v = \varepsilon$. If $l \neq m$, 
    let $v = v_1 \dots v_{k}$, where 
    $k = |l-m|$. 
    
   Finally we define a normal form $w$ of the 
   element $g = (\varphi,z)$ as
   a concatenation of $u$ and $v$: 
   $w = uv$. Informally speaking, 
   the normal form $w$ is obtained 
   as follows. 
   Imagine the lamplighter who
   moves along the graph $\Gamma$ 
   starting from $(0,0)$ writing 
   $a,a^{-1},b$ and $b^{-1}$ depending
   whether it moves right, left, up and down, 
   respectively. The lamplighter writes $c$ if 
   the lamp at the current position 
   $(x,y) \in \mathbb{Z}^2$ is 
   lit: $\varphi((x,y))= c$. After 
   the lamplighter reaches the position $f(\tau(m))$ it 
   either moves further along $\Gamma$, if $l>m$, 
   or goes back along $\Gamma$, if $l < m$, until 
   it reaches the position $\tau(l) = z$; 
   while moving it writes $a,a^{-1},b$ 
   and $b^{-1}$ depending
   whether it moves right, left, up and down, 
   respectively. 
   For illustration let us consider 
   the group element 
   $h \in \mathbb{Z}_2 \wr \mathbb{Z}^2$ 
   shown in Fig.~\ref{z2wrz2fig1}: 
   a black square means that the lamp at 
   the current position is lit, i.e., $\varphi((x,y))=c$, 
   a white square means that the lamp at the 
   current position is unlit, i.e., 
   $\varphi((x,y))=e$, and a black circle shows
   the position of the lamplighter $z$ and 
   that the lamp at this position is lit, i.e., 
   $\varphi(z)=c$.  
   For this group element $h$ the 
   word $u$ is as follows: 
   \begin{equation*}
   \begin{split}
   u = a c b a^{-1} a^{-1} b^{-1}
   c b^{-1} c a a a b b b a^{-1} c a^{-1}
   a^{-1} a^{-1} b^{-1} b^{-1} c b^{-1} b^{-1}
   a a c a a a  b c b b c b c b c \\ 
   a^{-1} c 
   a^{-1} a^{-1} a^{-1} a^{-1}
   a^{-1} c b^{-1} c b^{-1} b^{-1} 
   b^{-1} b^{-1} c b^{-1} a c 
   a c a a a a a c,
   \end{split}
   \end{equation*} 
   and the word $v$ is as follows: 
   \begin{equation*}
   v = a^{-1} a^{-1} a^{-1}	a^{-1} a^{-1} a^{-1} a^{-1}
       b b b b b b a a a a a a b^{-1} b^{-1} b^{-1}
       b^{-1} b^{-1} a^{-1} a^{-1} a^{-1},
   \end{equation*}	 
   and the normal form of $h$ is a concatenation of $u$ 
   and $v$.     
   The described normal form of 
   $\mathbb{Z}_2 \wr \mathbb{Z}^2$ is 
   prefix--closed, so it is quasiregular.  
   Let us show that it satisfies the 
   $\sqrt{n}$--fellow traveler property. 

   We start with a pair of group elements 
   $g = (\varphi,z)$ and $ga = (\varphi, z')$, where 
   $z' = z + (1,0)$. 
   Let $l'$ be the integer for which 
   $\tau(l')=z'$. 
   Let $w_a = u_a v_a$ be the 
   normal form of $ga$.
   We first notice that  
   $d_A (\pi (w(n)),\pi (w_a(n)))$
   can be bounded from above by $|l - l'|$
   for every $n$: 
   \begin{equation} 
   \label{dAupperZ2} 
    d_A (\pi (w(n)),\pi (w_a(n))) 
    \leqslant |l - l'|.
   \end{equation}   
   For a given $p =(x,y) \in \mathbb{Z}^2$, let
   $r (p) = \max \{|x|,|y|\}$ and $k(p)$ be an integer 
   for which $\tau (k(p)) = p$. 
   One can notice the following lower and upper bounds
   for $k(p)$: 
   \begin{equation}
   \label{perim_ineqZ2} 
    (2r(p) -1)^2 - 1 \leqslant k(p) 
    \leqslant (2r(p)+1)^2 - 1. 
   \end{equation} 
   In particular, \eqref{perim_ineqZ2} implies that 
   $(2r(z)-1)^2 -1 \leqslant l \leqslant (2 r(z) + 1)^2 -1$ and $(2 r(z') -1)^2 -1 \leqslant l'           \leqslant (2 r(z') + 1)^2  - 1$. 
   Therefore,    
   $|l - l'| \leqslant  
    4(r(z')+ r(z)) |r(z')-r(z)+1|$.  
   Since $|r(z')-r(z)| \leqslant 1$, we obtain 
   that $|l - l'| \leqslant 8 (r(z) + r(z'))$. 
   Therefore,
   by \eqref{dAupperZ2}
   we obtain that for every $n$: 
   \begin{equation}
   \label{dAupperZ2_2}   
      d_A (\pi (w(n)),\pi (w_a(n))) \leqslant 
       16 r(z) + 8,\,\,
       d_A (\pi(w(n)),\pi(w_a(n))) \leqslant
       16 r(z') +8. 
   \end{equation}	
   Now we notice that $u = u_a$. 
   Therefore, if $n \leqslant  |u|$, 
   then $d_A (\pi (w(n)),\pi (w_a(n)))=0$. 
   For $n > |u|$ we consider the following three cases: 
   \begin{itemize} 
   	\item{Suppose $l'\geqslant l \geqslant m$. 
   	      If $|u| < n \leqslant |u| + (l - m)$, 
   	      then 
   	      $d_A (\pi (w(n)),\pi (w_a(n))) =0$.  
   	      By \eqref{dAupperZ2_2} we have that
   	      $d_A (\pi(w(n)),\pi(w_a(n))) \leqslant 16 r(z) + 8$. Therefore, 
   	      by \eqref{perim_ineqZ2}, 
   	      $d_A (\pi(w(n)),\pi (w_a(n))) \leqslant 8 (\sqrt{l+1} + 1) + 8$. 
   	      If $n > |u| + (l - m)$, then  $n \geqslant l$. 
   	      Therefore, 
   	      $d_A (\pi (w(n)),\pi (w_a(n))) \leqslant 8 \sqrt{n +1} + 16$.
   	    Similarly, suppose $l\geqslant l' \geqslant m$. 
   	  	If $|u| < n \leqslant |u| + (l' - m)$, 
   	  	then 
   	  	$d_A (\pi(w(n)),\pi (w_a(n))) =0$. 
   	  	By \eqref{dAupperZ2_2} we have that
   	  	$d_A (\pi(w(n)),\pi(w_a(n))) \leqslant 16 r(z') + 8$. Therefore, 
   	  	by \eqref{perim_ineqZ2}, 
   	  	$d_A (\pi (w(n)),\pi(w_a(n))) \leqslant 8 (\sqrt{l'+1} + 1) + 8$. 
   	  	If $n > |u| + (l' - m)$, then $n \geqslant l'$. Therefore,   
   	  	$d_A (\pi(w(n)),\pi(w_a(n))) \leqslant 8 \sqrt{n +1} + 16$.}
   	  \item{Suppose 
   	  	    $l' \geqslant m \geqslant l$. 
   	  	    By \eqref{dAupperZ2_2} we have 
   	  	    that $d_A(\pi (w(n)), 
   	  	    \pi(w_a(n))) 
   	  	    \leqslant 16 r(z) +8$. Therefore, by \eqref{perim_ineqZ2}, 
   	  	    $d_A (\pi(w(n)),\pi(w_a(n))) \leqslant 8 (\sqrt{l+1} + 1) + 8$. 
   	  	    If $n> |u|$, then  $n \geqslant l$. 
   	  	    Therefore,  $d_A (\pi(w(n)),\pi(w_a(n))) \leqslant 8 \sqrt{n +1} + 16$. 
   	   Similarly, suppose $l \geqslant m \geqslant l'$. 
   	   By \eqref{dAupperZ2_2} we have that 
   	   $d_A(\pi(w(n)), 
   	   \pi(w_a(n))) \leqslant 
   	   16 r'(z) +8$. Therefore, 
   	   by \eqref{perim_ineqZ2}, 
   	   $d_A (\pi(w(n)),\pi(w_a(n))) \leqslant 8 (\sqrt{l'+1} + 1) + 8$. 
   	   If $n>|u|$, then $n \geqslant l'$. Therefore,   
   	   $d_A (\pi(w(n)),\pi (w_a(n))) \leqslant 8 \sqrt{n +1} + 16$.}	     	    	     
      \item{Suppose $m \geqslant l' \geqslant l$. 
            If $|u|<n \leqslant |u| +  (m - l') $, 
            then
            $d_A (\pi (w(n)),\pi (w_a(n)))=0$.
            By \eqref{dAupperZ2_2} we have that 
            $d_A(\pi (w(n)), 
            \pi(w_a(n))) \leqslant 
            16 r'(z) +8$.
            Therefore, by \eqref{perim_ineqZ2}, 
            $d_A (\pi(w(n)),\pi(w_a(n))) \leqslant 8 (\sqrt{l'+1} + 1) + 8$. 
            If $n > |u| + (m - l')$, then 
            $n \geqslant l'$. Therefore, 
            $d_A (\pi(w(n)),\pi (w_a(n))) \leqslant 8 \sqrt{n +1} + 16$.
       Similarly, suppose $m \geqslant l \geqslant l'$. 
             If $|u| <  n \leqslant 
             |u| + (m - l)$, then 
             $d_A (\pi(w(n)),\pi(w_a(n)))=0$. 
             By \eqref{dAupperZ2_2} we have that 
             $d_A(\pi(w(n)), 
             \pi (w_a(n))) \leqslant 
             16 r(z) +8$. Therefore, 
             by \eqref{perim_ineqZ2}, 
             $d_A (\pi(w(n)),\pi(w_a(n))) \leqslant 8 (\sqrt{l+1} + 1) + 8$. 
             If $n > |u| + (m - l)$, $n \geqslant l$. 
             Therefore, $d_A (\pi (w(n)),\pi(w_a(n))) \leqslant 8 \sqrt{n +1} + 16$.}       
   \end{itemize}	 
   From these three cases we can see that $d_A (\pi(w(n)),\pi(w_a(n))) \preceq \sqrt{n}$. 
 
   Now let us consider a pair of group elements 
   $g = (\varphi,z)$ and $g c = 
   (\varphi',z)$, where 
   $\varphi(\gamma)= \varphi'(\gamma)$ if $\gamma \neq z$ 
   and $\varphi'(\gamma)= \varphi(\gamma) c$ if $\gamma = z$.  
   There are two different cases to consider: 
   $l \geqslant m$ and $l < m$. 
   Let $w_c = u_c v_c$ be the 
   normal form of $gc$.
   The case 
   $l \geqslant m$  is straightforward: 
   $d_A(\pi(w(n)), 
   \pi(w_c(n))) \leqslant 1$ for all $n$.

   Now let us consider the case $l<m$.  
   We denote by  $\widetilde{u}$ the following prefix 
   of $u$: 
   $\widetilde{u} = u_0 u_1 \dots u_{m-1} \alpha_m$. 
   If $n \leqslant |\widetilde{u}|$, then 
   $d_A (\pi(w(n)),\pi(w_c(n))) =0$.
   Suppose now that $n> |\widetilde{u}|$. 
   We assume that $\beta_{m} = \varepsilon$.
   Let $\widetilde{v}$ be the suffix of 
   $w(n)$ which follows 
   $\widetilde{u}$: 
   $w(n) = \widetilde{u}
                    \widetilde{v}$. 
   
   Let $z_0 = \tau(m)$ and  
   $z_1  = \tau(k)$ be a position of 
   the lamplighter for a group element 
   $\pi(w(n))$.
   We denote by $(x_0,y_0)$ and $(x_1,y_1)$  the coordinates of $z_0$ and $z_1$, respectively.
   There are two different cases: 
   $\widetilde{v} = 
    \alpha_{m+1} \beta_{m+1} \dots
    \beta_{k-1} \alpha_k$ or
   $\widetilde{v} = 
   \alpha_{m+1} \beta_{m+1} \dots \alpha_k \beta_k$, where $\beta_k = c$. Let us
   analyze these two cases: 
   \begin{itemize} 
    \item{Suppose $\widetilde{v} = 
    	\alpha_{m+1} \beta_{m+1} \dots \alpha_k$. Then 
    	$w_c(n) = 
    	\widetilde{u} c 
    	\alpha_{m+1} \beta_{m+1} \dots
    	\alpha_{k-1} \beta_{k-1}$, so   
    	we have that: 
    	\begin{equation*}
    	\pi(w_c(n)) = 
    	\pi(w(n))
    	a^{x_0-x_1}b^{y_0-y_1} c 
    	a^{x_1 - x_0}b^{y_1 - y_0} \alpha_k^{-1}.
    	\end{equation*}    
    Therefore, 
    $d_A (\pi(w(n)),\pi(w_c(n))) 
     \leqslant 2 |x_1 - x_0| + 2 |y_1 - y_0| + 2 
     \leqslant 2 (|x_1|  + |y_1| + |x_0|+ |y_0| + 1) 
     \leqslant 2 (2r(z_1) + 2 r(z_0) + 1)$. 
     By \eqref{perim_ineqZ2}, 
     $2 r(z_0) \leqslant \sqrt{m+1} + 1$ and 
     $2 r(z_1) \leqslant \sqrt{k+1} + 1$. 
     Therefore, 
     $d_A (\pi(w(n)),\pi(w_c(n))) 
      \leqslant 2 (\sqrt{k+1} + 1 + \sqrt{m+1} + 1 +1)    $.
     Since $n \geqslant k \geqslant m$, 
     we have that
     $d_A (\pi(w(n)),\pi(w_c(n))) 
      \leqslant 4 \sqrt{n+1} + 6$.}	 
    \item{Suppose $\widetilde{v} = 
    	  \alpha_{m+1} \beta_{m+1} \dots \alpha_k c$. Then
    	  $w_c(n) = 
    	  \widetilde{u} c 
    	  \alpha_{m+1} \beta_{m+1}\dots
    	  \beta_{k-1} \alpha_{k}$, so   
    	  we have that: 
    	  \begin{equation*}
    	  	 \pi(w_c(n)) = \pi (w(n))
    	  	 a^{x_0 - x_1} b^{y_0 - y_1}  c 
    	  	 a^{x_1 - x_0} b^{y_1- y_0} c. 	  	 
    	  \end{equation*}	
    	  Therefore, $d_A (\pi(w(n)),\pi(w_c(n))) 
    	  \leqslant 2 |x_1 - x_0| + 2 |y_1 - y_0| + 2$. 
    	  By exactly the same argument as in the 
    	  previous case we have 
    	  that  
    	  $d_A (\pi(w(n)),\pi(w_c(n))) 
    	  \leqslant 4 \sqrt{n+1} + 6$.}   
   \end{itemize}	
   Therefore, if $\beta_m = \varepsilon$, we 
   have that  
   $d_A (\pi(w(n)),\pi(w_c(n))) 
    \preceq \sqrt{n}$.    
   If $\beta_m = c$, then swapping the role 
   of $w(n)$ and $w_c(n)$ 
   in the argument above yields $d_A (\pi(w(n)),\pi(w_c(n))) 
   \preceq \sqrt{n}$. 
   Thus we finally proved that 
   $s(n) \preceq \sqrt{n}$.  
 \end{proof}

 \begin{remark} 	
    We note that for the normal form 
    in the proof of Theorem \ref{Z2wrZ2_quasireg_norm_form}
    the upper bound 
    $s(n) \preceq \sqrt{n}$ is sharp. 
    A proof of this is as follows. 
    For a given $m \geqslant 0$ let  
    $\varphi_m : \mathbb{Z}^2 \rightarrow \mathbb{Z}_2$ 
    be a function for which  
    $\varphi_m(\tau(m)) = c$ and 
    $\varphi_m(p) = e$ for
    $p \neq \tau(m)$.  
    We denote by 
    $g_m$ the group element 
    $g_m = (\varphi_m, (0,0))$. 
    Let $w_m$ and 
    $w_{ma}$ be the normal forms of  
    $g_m$ and $g_m a$, respectively. 
    Let $n = m+1$ and 
    $z_m = \tau(m)$. 
    Then $\pi(w_m(n))= 
    (\varphi_m, z_m)$ 
    and 
    $\pi(w_{ma}(n)) = (\varphi_0 , z_m)$. 
    Let $z_m = (x_m, y_m)$. 
    The distance $d_A (\pi(w_m(n)),
                       \pi (w_{ma}(n)))$  
                is equal to       
                $2(|x_m| + |y_m| +1)$.  
    Indeed, 
    in order to obtain $g_m a$ from $g_m$ 
    the lamplighter moves from the position 
    $(x_m,y_m)$ to the position $(0,0)$ choosing
    a shortest route, switch 
    on a lamp, moves back to the 
    position $(x_m,y_m)$ and switch off a lamp.
    In particular, $d_A (\pi(w_m(n)),
    \pi(w_{ma}(n))) \geqslant 2 r(z_m) +1$. 
    By \eqref{perim_ineqZ2}, 
    we have that $(2r(z_m) +1)^2 \geqslant m+1$. 
    Therefore, 
    $d_A (\pi(w_m(n)),
    \pi (w_{ma}(n))) \geqslant \sqrt{n}$. 
    This implies that $\sqrt{n} \preceq s(n)$.

 \end{remark}


\section{Discussion and Open Questions}
\label{conclusion_sec}

Theorems \ref{ssp_no_quasidesic_thm} and 
\ref{nonfp_no_quasigeodesic_thm}  
show that for a finitely presented 
group with the strongly--super--polynomial 
Dehn function or a non--finitely 
presented group there exists no 
quasigeodesic normal form satisfying the 
$f(n)$--fellow traveler property. 
The following question is apparent 
from these results.
\begin{enumerate}
\item{Is there a quasigeodesic normal 
form satisfying the $f(n)$--fellow traveler
property for some finitely presented 
non--automatic
group 
with the Dehn function which is not strongly--super--polynomial? Some interesting 
candidates to consider this question 
include, for example, the Heisenberg group 
$\mathcal{H}_3 (\mathbb{Z})$ and 
the higher Heisenberg groups 
$\mathcal{H}_{2k+1}(\mathbb{Z})$, 
$k > 1$.} 
\end{enumerate} 

\noindent Theorems \ref{BS_qusireg_normal_form}  and 
\ref{Z2wrZ2_quasireg_norm_form} show 
the existence of a quasiregular 
normal form satisfying the 
$f(n)$--fellow traveler property
for Baumslag--Solitar 
groups $BS(p,q)$, 
$1 \leqslant p < q$, and the wreath 
product $\mathbb{Z}_2 \wr \mathbb{Z}^2$.
We leave the following question for future 
consideration. 
\begin{enumerate}
\item[2.]{Is there a quasiregular normal 
form satisfying the $f(n)$--fellow traveler 
property for the fundamental group 
of a torus bundle over a circle 
$\mathbb{Z}^2 \rtimes_A \mathbb{Z}$, 
where $A \in \mathrm{GL}(2,\mathbb{Z})$ has 
two real eigenvalues not equal to $\pm 1$?  
Recall that the latter guarantees that
$\mathbb{Z}^2 \rtimes_A \mathbb{Z}$ has at least exponential Dehn function, so no 
quasigeodesic normal form satisfying the 
$f(n)$--fellow traveler property exists in 
this case.}
\end{enumerate} 

\noindent In addition to that there are other questions that might be worth considering. Is there a quasiregular normal form 
satisfying the $f(n)$--fellow traveler 
proper for $BS(p,q)$, 
$1\leqslant p < q$, for some 
$f \ll \log (n)$? Is there a quasiregular normal form 
satisfying the $f(n)$--fellow traveler 
proper for $\mathbb{Z}_2 \wr \mathbb{Z}^2$, 
$1\leqslant p < q$, for some 
$f \ll \sqrt{n}$? What are the other 
examples of groups which admit 
quasiregular normal forms satisfying the 
$f(n)$--fellow traveler property?     

\section*{Acknowledgments} 

The authors thank the anonymous reviewer for useful comments.

\bibliographystyle{splncs03}

\bibliography{weakly_fellow}

\end{document}